\numberwithin{equation}{section}
\newtheorem{theorem}{Theorem}[section]
\newtheorem{lemma}[theorem]{Lemma}
\newtheorem{corollary}[theorem]{Corollary}
\theoremstyle{definition}
\newtheorem{definition}[theorem]{Definition}
\newcommand{\mc}[1]{\mathcal{#1}}
\newcommand{\setm}{\setminus}
\newcommand{\empt}{\emptyset}
\newcommand{\subs}{\subset}
\def\<{\left\langle}
\def\>{\right\rangle}
\def\br#1;#2;{\bigl[ {#1} \bigr]^ {#2} }
\newcommand{\dis}{\mc D}
\newcommand{\diso}{\dis_{\omega}}
\newcommand{\rcp}[1]{\operatorname{RC}^+(#1)}
\author[I. Juh\'asz]{Istv\'an Juh\'asz}
\address      { Alfréd Rényi Institute of Mathematics, Hungarian Academy of Sciences }
\email{juhasz@renyi.hu}
\author[L. Soukup]{Lajos Soukup}
\thanks
  {
   }
\address
      { Alfr{\'e}d R{\'e}nyi Institute of Mathematics, Hungarian Academy of Sciences }
\email{soukup@renyi.hu}
\author[Z. Szentmikl\'ossy]{Zolt\'an Szentmikl\'ossy}
\address{E\"otv\"os University of Budapest}
\email{szentmiklossyz@gmail.com
}
\subjclass[2010]{54A35, 03E35, 54A25}
\keywords{Lindelöf, countable extent, Lindelöf-generated, countable extent generated,
e-generated, resolvable}
\title[Resolvability of L- and e-generated spaces]{On the resolvability of Lindelöf-generated and (countable extent)-generated spaces}
\date{\today}
\thanks{The research on and preparation of this paper was
supported by NKFIH grant no. K113047.}
\begin{document}

\begin{abstract}
Given a topological property $P$, we say that the space $X$ is $P$-{\em generated} if for
any subset $A\subs  X$ that is not open in $X$ there is a subspace $Y \subs X$ with property $P$
such that $A\cap Y$ is not open in $Y$. (Of course, in this definition we could replace "open" with "closed".)

\smallskip

In this paper we prove the following two results:
\begin{enumerate}[(1)]
\item Every Lindelöf-generated regular space  $X$ satisfying $|X|=\Delta(X)={\omega}_1$ is ${\omega}_1$-resolvable.
\item  Any (countable extent)-generated regular space  $X$ satisfying $\Delta(X)>{\omega}$ is ${\omega}$-resolvable.
\end{enumerate}

\smallskip

These are significant strengthenings of our earlier results from \cite{JSSz2} which can be obtained from (1) and (2)
by simply omitting the "-generated" part. Moreover, the second result improves a recent result of Filatova and Osipov
from \cite{FO} which states that Lindelöf-generated regular spaces of uncountable dispersion character are 2-resolvable.
\end{abstract}

\maketitle

\section{Introduction and preliminaries}

Since this work about resolvability is to appear in a volume dedicated to the memory of Wis Comfort, it is very appropriate to start by
acknowledging that our interest in this topic had been initiated by the great survey paper \cite{CG} by Wis and his student
Garcia-Ferreira.

Now, let $\mathcal{C}$ be a topological property, or equivalently: a class of topological spaces. For any space $X$ we shall then denote by
$\mathcal{C}(X)$ the family of all subspaces of $X$ having property $\mathcal{C}$, i.e. $$\mathcal{C}(X) = \{Y \subs X : Y \in \mathcal{C} \}.$$

\begin{definition}\label{def:P}
The space $X$ is said to be $\mathcal{C}$-{\em generated} provided that, for
any subset $A\subs  X$, if $A\cap Y$ is open (resp. closed) in $Y$ for all $Y \in \mathcal{C}(X)$ then $A$ is actually open (resp. closed) in $X$.
\end{definition}

Intuitively this means that the family of subspaces $\mathcal{C}(X)$ determines the topology of $X$.
We shall use $\widetilde{\mathcal{C}}$ to denote the class of  $\mathcal{C}$-generated spaces.
Trivially, every space in $\mathcal{C}$ is also
$\mathcal{C}$-generated, i.e. $\mathcal{C} \subs \widetilde{\mathcal{C}}$. If $\mathcal{C}$ is closed hereditary then so is $\widetilde{\mathcal{C}}$,
moreover if $\mathcal{C}$ is closed hereditary then every open subspace of a {\em regular} $\mathcal{C}$-generated space is again $\mathcal{C}$-generated.

For example, compact-generated spaces coincide with what is known as $k$-spaces. We mention here that Pytkeev proved in \cite{P}
that Hausdorff $k$-spaces are maximally resolvable, i.e. $\Delta(X)$-resolvable, where $\Delta(X)$, the dispersion character of $X$,
denotes the smallest cardinality of a non-empty open set in $X$. The fact that compact Hausdorff spaces are maximally resolvable
had essentially been already established by Hewitt in his work \cite{H} that started the study of resolvability.

Motivated by the fact, also due to Hewitt, that countable regular irresolvable spaces exist,
Malychin asked in \cite{M} if regular Lindelöf spaces of uncountable dispersion character are 2-resolvable.
This question was answered affirmatively by Filatova in \cite{F} and this result was strengthened in \cite[Theorem 3.1]{JSSz2} to:

\begin{theorem}\label{tm:e}
Every regular space of countable extent and uncountable dispersion character is $\omega$-resolvable.
\end{theorem}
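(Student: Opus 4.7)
The plan is to argue by contradiction. Assume $X$ is regular with countable extent and $\Delta(X)>\omega$, yet fails to be $\omega$-resolvable. I would first apply a standard decomposition to pass to a non-empty open subspace $Y\subs X$ which is \emph{hereditarily $\omega$-irresolvable} in the sense that no non-empty relatively open subset of $Y$ is $\omega$-resolvable. The three hypotheses survive this restriction. By Illanes' theorem on resolvability I may further assume $Y$ is not even $n$-resolvable for some fixed $n<\omega$, and from now on rename $Y$ as $X$.

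The main step is to exhibit a closed discrete subset of $X$ of cardinality $\omega_1$, which directly contradicts countable extent. I plan to build it by transfinite recursion of length $\omega_1$: at stage $\alpha$, assuming $\{(x_\beta,V_\beta):\beta<\alpha\}$ have been chosen with each $V_\beta$ an open neighborhood of $x_\beta$ and the closures $\overline{V_\beta}$ pairwise disjoint, I would select a new point $x_\alpha$ and open $V_\alpha\ni x_\alpha$ so that $\overline{V_\alpha}\cap\overline{V_\beta}=\emptyset$ for all $\beta<\alpha$. Regularity supplies the closure-separation once $x_\alpha$ lies outside the closed set $F_\alpha=\overline{\bigcup_{\beta<\alpha}\overline{V_\beta}}$. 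The nontrivial point is that $F_\alpha\ne X$; I would argue this by showing that hereditary $\omega$-irresolvability combined with $\Delta(X)>\omega$ prevents any countable union of such closures from exhausting $X$, via a pigeonhole construction that would otherwise combine the pieces into an $\omega$-resolution of an open set, contradicting our reduction.

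The chief obstacle, which I expect to be the hardest part of the argument, is ensuring that the family $\{x_\alpha:\alpha<\omega_1\}$ is actually closed discrete in $X$, rather than merely pairwise $V_\alpha$-separated. An external accumulation point $p\in X\setm\bigcup_{\alpha<\omega_1}\overline{V_\alpha}$ could still spoil everything. To block this, I would interleave a reflection step: at each limit stage $\alpha$ I would introduce a countable elementary submodel $M$ containing the construction so far and diagonalize against potential accumulation points coded in $M$, choosing $x_\alpha$ and $V_\alpha$ so as to avoid any prescribed open neighborhood of such a would-be $p\in M$. If this bookkeeping can be successfully carried out through all $\omega_1$ stages, the resulting closed discrete set of size $\omega_1$ yields the desired contradiction with countable extent, and the theorem follows.
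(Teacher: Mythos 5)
There is a genuine gap, and it sits exactly where you locate it yourself: the construction of an \emph{uncountable closed discrete} set. Note first that your target claim (``regular, hereditarily $\omega$-irresolvable, $\Delta>\omega$ implies there is an uncountable closed discrete subset'') is just a restatement of the theorem, so nothing has been reduced; the entire burden of proof is carried by the recursion, and the recursion does not close. Choosing $x_\alpha$ with pairwise disjoint closed neighbourhoods $\overline{V_\alpha}$ gives a (strongly) discrete set, but to keep a point $p$ from being an accumulation point of $\{x_\alpha:\alpha<\omega_1\}$ you must commit to a \emph{single} neighbourhood of $p$ that all but countably many (indeed, a whole tail of) the $x_\alpha$ avoid. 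There are up to $|X|$ candidate points $p$ (no bound on $|X|$ is assumed), and a countable elementary submodel at each limit stage lets you make only countably many such commitments per stage, hence at most $\omega_1$ in total; moreover nothing in the scheme explains which neighbourhood of a given $p$ the tail should avoid, nor how a \emph{failure} of the construction would be converted into an $\omega$-resolution of some open set. Since countable extent guarantees that the construction must fail, the contradiction has to be extracted from the failure, and that mechanism is precisely what is missing. A secondary gap: the claim $F_\alpha\ne X$ is not automatic even at countable stages, since a countable union of closures of pairwise disjoint open sets can cover a space (e.g.\ $(-\infty,0)$ and $(0,\infty)$ in $\mathbb{R}$), so the ``pigeonhole construction'' needs to be an actual argument.

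For comparison: the present paper does not prove this statement at all; it quotes it as Theorem 3.1 of \cite{JSSz2}. The proof there does not attempt to produce an uncountable closed discrete set directly. Instead it splits into the case where every point of some open set is an accumulation point of a countable (strongly) discrete set, which is handled by the SD-space theorem (Theorem \ref{tm:sh} here), and the opposite case, which is attacked via a stratification of the space (analogous to the ``good partitions'' of Definition \ref{def:good}), complete accumulation points, and the notion of $\kappa$-approachability (Theorem \ref{tm:app}), culminating in the construction of $\omega$ many disjoint dense sets rather than a closed discrete set. If you want a workable route, that is the machinery to engage with; the closed-discrete-set strategy, as outlined, cannot be completed.
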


\noindent(A space has countable extent if every uncountable subset has an accumulation point in it.)

It was also proved in \cite[Theorem  4.3]{JSSz2} that

\begin{theorem}\label{tm:L}
Every Lindelöf regular space $X$ satisfying $|X|=\Delta(X)={\omega}_1$ is even ${\omega}_1$-resolvable.
\end{theorem}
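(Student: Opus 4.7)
The plan is to partition $X$ into $\omega_1$ pairwise disjoint dense subsets by constructing a ``resolving matrix'' $\{x^\beta_\gamma : \beta,\gamma < \omega_1\}$ of distinct points of $X$, where each row $D_\beta = \{x^\beta_\gamma : \gamma < \omega_1\}$ hits every member of a prescribed $\pi$-base of $X$ of cardinality $\omega_1$. The $D_\beta$ are then automatically pairwise disjoint and dense, and distributing the leftover points of $X$ among them yields a genuine $\omega_1$-resolution.

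\smallskip

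The first step is to locate a $\pi$-base of size $\omega_1$. Fix a continuous $\in$-chain $\langle M_\alpha : \alpha < \omega_1\rangle$ of countable elementary submodels of a sufficiently large $H(\theta)$ with $X,\tau(X)\in M_0$ and $X\subset\bigcup_{\alpha<\omega_1} M_\alpha$; the latter inclusion is achievable precisely because $|X|=\omega_1$. Set $\mathcal{V}=\bigcup_{\alpha<\omega_1}(\tau(X)\cap M_\alpha)$, so $|\mathcal{V}|\leq\omega_1$. The main claim is that $\mathcal{V}$ is a $\pi$-base for $X$: given a non-empty open $W\subset X$, pick $p\in W$ and $\alpha$ with $p\in M_\alpha$; using regularity one sandwiches a closed neighborhood of $p$ inside $W$, and then the Lindelöf property combined with the elementarity of $M_\alpha$ should be enough to force an open $V\in M_\alpha\cap\tau(X)$ with $V\subset W$.

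\smallskip

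With $\mathcal{V}$ in hand, enumerate $\mathcal{V}=\{U_\gamma : \gamma < \omega_1\}$ and fix a bijection $e:\omega_1\to\omega_1\times\omega_1$, writing $(\beta_\eta,\gamma_\eta)=e(\eta)$. By recursion on $\eta<\omega_1$ choose $x^{\beta_\eta}_{\gamma_\eta}\in U_{\gamma_\eta}$ distinct from every point chosen at earlier stages: at stage $\eta$ the forbidden set has cardinality $|\eta|\leq\omega$, while $|U_{\gamma_\eta}|\geq\Delta(X)=\omega_1$, so such a choice is always available. The rows $D_\beta=\{x^\beta_\gamma : \gamma < \omega_1\}$ are pairwise disjoint by construction and each meets every member of $\mathcal{V}$, hence is dense by the $\pi$-base claim. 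Distributing the (at most $\omega_1$ many) leftover points of $X\setminus\bigcup_\beta D_\beta$ arbitrarily among the $D_\beta$ gives the desired $\omega_1$-resolution.

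\smallskip

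I expect the main obstacle to be the $\pi$-base claim in the first step; essentially all of the topological content of the theorem is concentrated there, since after $\pi w(X)\leq\omega_1$ has been extracted from the elementary chain, the matrix argument reduces to a routine ``room to play'' recursion powered by $\Delta(X)=\omega_1$. Regularity and the Lindelöf property must both be used precisely at that point. A natural variant, should a clean global $\pi$-base argument prove elusive, is to interleave the matrix construction with the elementary chain — servicing at stage $\alpha$ only the open sets appearing in $M_\alpha$ — although this merely reshuffles, rather than avoids, the underlying topological content.
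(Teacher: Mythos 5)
The matrix construction in your second and third paragraphs is correct and completely standard: once one knows $\pi w(X)\le\Delta(X)=\omega_1$, maximal (hence $\omega_1$-) resolvability follows exactly as you describe. But, as you yourself suspect, the entire content of the theorem is concentrated in your first step, and that step does not work. Elementarity of $M_\alpha$ only reflects statements whose parameters lie in $M_\alpha$; the target open set $W$ is not such a parameter, and the only parameter you have is the point $p$. To conclude that some $V\in\tau(X)\cap M_\alpha$ is contained in $W$ you would in effect need a local $\pi$-base at $p$ of size $\le\omega_1$ that $M_\alpha$ can see, and neither regularity nor the Lindelöf property supplies one. Worse, the claim you are trying to prove fails at the countable level, where it would have the very same proof: a countable regular crowded space is automatically Lindelöf and is covered by a single countable elementary submodel $M$ with $X,\tau(X)\in M$, yet $\tau(X)\cap M$ cannot in general be a $\pi$-base, since $\pi w(X)\le\omega=\Delta(X)$ would make $X$ maximally, hence $\omega$-resolvable, contradicting the existence of Hewitt's countable irresolvable regular spaces. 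Your sketch does not use Lindelöfness in any essentially uncountable way, so the same obstruction blocks the $\omega_1$ case: there is no reason why a Lindelöf regular space with $|X|=\Delta(X)=\omega_1$ should have $\pi$-weight $\omega_1$.

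This is precisely why the actual proof (Theorem 4.3 of \cite{JSSz2}, quoted here as Theorem \ref{tm:L}, and its generalization Theorem \ref{tm:lindelof-generated} proved in Section 2) does not go through $\pi$-weight at all. It exploits Lindelöfness through complete accumulation points --- every uncountable subset of a Lindelöf space has one --- to manufacture $\omega_1$-approachable sets (Theorem \ref{tm:app}), and then produces the $\omega_1$ disjoint dense sets from an almost disjoint, hence essentially disjoint, family of witnessing sets (Theorems \ref{tm:main} and \ref{tm:pinw}), after first disposing of the countably tight part of the space via Pytkeev's theorem. To salvage your approach you would have to replace the $\pi$-base claim by an argument of this kind; the ``interleaved'' variant you mention only reshuffles, and does not remove, the difficulty.
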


Recently, in \cite{FO}, Filatova and Osipov have extended Filatova's above result from Lindelöf to Lindelöf-generated spaces.
The aim of this paper is to show that our above results also admit such an extension.

\medskip


Let us now fix some notation. For a topological space $X$,
we denote by $\tau^+(X)$ the collection of all non-empty open sets in $X$ and by
$\rcp{X}$ the family of all non-empty regular closed subsets of $X$.
A $\pi$-network $\mathcal{N}$ of $X$ is a family of non-empty sets such that every
$U \in \tau^+(X)$ includes a member of $\mathcal{N}$. Thus, if $X$ is regular then $\rcp{X}$ is a $\pi$-network in $X$.

For $A\subs  X$ we denote by $A'$ the derived set of $A$ consisting of all accumulation points of $A$ and by $ A^\circ$
the set of all complete accumulation points of $A$ in $X$.

If $X$ is a space and ${\kappa}$ is a cardinal then we write
\begin{displaymath}
 \dis_{\kappa}(X)=\{D\in \br X;{\kappa}; : \text{$D$ is discrete}\},
\end{displaymath}

moreover,

\begin{displaymath}
\mathfrak{R}_\kappa(X) = \{Z\subs X : Z\,\, \text{is ${\kappa}$-resolvable}\}.
\end{displaymath}

Our notation concerning cardinal functions is standard, as e.g. in \cite{J}.

To conclude this section, we list a few results about resolvability which will be
used in our proofs later. All but the last two are old and well-known.

\begin{theorem}[El'kin, \cite{El}]\label{tm:el}
If $X$ is a topological space, ${\kappa}$ is a cardinal, moreover $\mathfrak{R}_\kappa(X)$
is a $\pi$-network in $X$, then $X$ is ${\kappa}$-resolvable.
\end{theorem}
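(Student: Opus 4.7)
The plan is to use Zorn's lemma to extract a maximal disjoint family of $\kappa$-resolvable subspaces, and then to glue together their individual resolutions into a single $\kappa$-resolution of $X$. First, by Zorn's lemma applied to the collection of subfamilies of $\mathfrak{R}_\kappa(X)$ whose members are pairwise disjoint (ordered by inclusion; chains obviously have upper bounds given by union), fix a maximal pairwise disjoint family $\mathcal{F}\subseteq\mathfrak{R}_\kappa(X)$, and set $Y=\bigcup\mathcal{F}$.

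Next, I would show that $Y$ is dense in $X$. If not, $W=X\setminus\overline{Y}$ is non-empty open, so the $\pi$-network hypothesis supplies some $Z\in\mathfrak{R}_\kappa(X)$ with $Z\subseteq W$; but then $Z$ is disjoint from every member of $\mathcal{F}$, contradicting maximality. With density of $Y$ in hand, for each $Z\in\mathcal{F}$ choose a decomposition $Z=\bigsqcup_{\alpha<\kappa}Z_\alpha^Z$ with each $Z_\alpha^Z$ dense in $Z$, and put
\[
D_\alpha \;=\; \bigcup_{Z\in\mathcal{F}} Z_\alpha^Z \qquad (\alpha<\kappa).
\]
The pieces $D_\alpha$ are pairwise disjoint because the partitions of the disjoint sets $Z\in\mathcal{F}$ are.

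To finish I would verify that each $D_\alpha$ is dense in $X$. Given a non-empty open $U\subseteq X$, density of $Y$ yields some $Z\in\mathcal{F}$ with $U\cap Z\neq\emptyset$; but $U\cap Z$ is a non-empty open subset of the subspace $Z$, and $Z_\alpha^Z$ is dense in $Z$, so $U\cap Z_\alpha^Z\neq\emptyset$, whence $U\cap D_\alpha\neq\emptyset$. This exhibits $\kappa$ pairwise disjoint dense subsets of $X$, so $X$ is $\kappa$-resolvable.

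There is no serious obstacle: the whole argument hinges on the interplay between two uses of the subspace topology — the fact that the decomposition pieces $Z_\alpha^Z$ remain dense against every relatively open subset of $Z$, and the fact that the trace $U\cap Z$ of any open $U\subseteq X$ is automatically relatively open in $Z$. The only delicate point worth stating explicitly is that maximality of $\mathcal{F}$ is used solely to force $\bigcup\mathcal{F}$ to be dense; no refinement of the chosen resolutions $\{Z_\alpha^Z\}_{\alpha<\kappa}$ is needed.
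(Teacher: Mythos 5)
Your proof is correct. The paper does not prove this statement at all --- it is quoted as a classical result of El'kin with only a citation --- and your argument (a maximal pairwise disjoint subfamily of $\mathfrak{R}_\kappa(X)$ must have dense union by the $\pi$-network property, after which the chosen $\kappa$-resolutions of its members glue into a $\kappa$-resolution of $X$) is exactly the standard proof of that result, with all the relevant points (relative openness of $U\cap Z$, disjointness of the glued pieces) handled properly.
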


\begin{theorem}[Pytkeev, \cite{P}]\label{tm:Pt}
Every Hausdorff space $X$ with $t(X) = \omega < \Delta(X)$ is maximally, and hence $\omega_1$-resolvable.
\end{theorem}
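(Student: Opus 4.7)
The plan is to invoke El'kin's criterion (Theorem~\ref{tm:el}): setting $\kappa := \Delta(X)$ (which is $> \omega$), it suffices to show that $\mathfrak{R}_\kappa(X)$ is a $\pi$-network in $X$, i.e., every $U \in \tau^+(X)$ contains a $\kappa$-resolvable subspace. This local-to-global reduction lets me forget the global structure of $X$ and focus on producing disjoint dense sets inside an arbitrary fixed open set.

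Fix $U \in \tau^+(X)$. Since countable tightness and $T_2$ are hereditary, $X$ is dense-in-itself (from $\Delta(X) > 1$), and every non-empty open set has cardinality $\geq \kappa$, the open subspace $U$ inherits all the relevant hypotheses. I would then construct $\kappa$ pairwise disjoint dense subsets $\{D_\alpha : \alpha < \kappa\}$ of a suitable non-empty open $V \subseteq U$ by a transfinite recursion of length $\kappa$. Choose a $\pi$-base $\mc{B}$ of $V$ (e.g.\ take $\mc{B} = \tau^+(V)$) and enumerate the ``demands'' in $\kappa \times \mc{B}$ as $\langle (\alpha_\xi, W_\xi) : \xi < \kappa\rangle$, with each demand appearing cofinally often. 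At stage $\xi$, pick a point $x_\xi \in W_\xi$ distinct from all previously chosen points and assign it to $D_{\alpha_\xi}$. Hausdorffness supplies the basic open separation needed to avoid finitely many forbidden points at each step, and the cofinal repetition of demands ensures that every $D_\alpha$ meets every $W \in \mc{B}$, i.e.\ is dense in $V$.

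The main obstacle, and the real heart of Pytkeev's original argument, is to justify that a fresh $x_\xi$ exists at every stage, i.e.\ that the fewer-than-$\kappa$ previously chosen points do not cover $W_\xi$. This is not automatic: under countable tightness alone the closure of a countable set can be large, so a naive cardinality bound does not suffice. The key auxiliary lemma one needs is that in a Hausdorff space of countable tightness with $\Delta \geq \kappa$, no set of size $< \kappa$ is dense in any non-empty open set. Establishing this ``small sets are not everywhere dense'' principle, using $T_2$ to separate points and countable tightness to pass to countable witness subsets in an inductive argument, is the step I expect to require the most care; once it is in hand, the transfinite construction above and El'kin's theorem together deliver $\kappa$-resolvability of $X$.
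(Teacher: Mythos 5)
The paper does not actually prove this statement---it is quoted from Pytkeev's paper \cite{P}---so your attempt has to be judged on its own terms, and it contains a genuine gap. The auxiliary lemma you single out as the heart of the argument, namely that in a Hausdorff space of countable tightness with $\Delta(X) \ge \kappa$ no set of size $< \kappa$ is dense in any non-empty open set, is simply false: the unit interval $[0,1]$ is Hausdorff, first countable (hence countably tight) and has $\Delta([0,1]) = \mathfrak{c}$, yet the rationals form a countable dense subset. So the principle you intend to lean on cannot be established, and the plan collapses at the step you yourself flag as the one ``requiring the most care.''

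Moreover, the obstacle is misidentified. If the recursion has length $\kappa$, then at every stage fewer than $\kappa$ points have been chosen while $|W_\xi| \ge \Delta(X) = \kappa$, so a fresh point exists for trivial cardinality reasons---no Hausdorff separation and no tightness are needed. The real difficulty is elsewhere: to make each $D_\alpha$ dense you must meet every member of a $\pi$-base of $V$, and the $\pi$-weight of $V$ (certainly the cardinality of $\tau^+(V)$, your proposed choice of $\mathcal{B}$) may vastly exceed $\kappa = \Delta(X)$. Then the list of demands has length greater than $\kappa$, and after $\kappa$ stages the greedy construction can no longer guarantee fresh points. When $\pi w(V) \le \Delta(V)$ the greedy argument yields maximal resolvability for arbitrary $T_1$ spaces with no tightness hypothesis whatsoever; the entire content of Pytkeev's theorem is the remaining case, and that is precisely where countable tightness must be exploited in an essential, nontrivial way (through the fact that closures are unions of closures of countable subsets), which your outline gestures at but never actually does.
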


We say that $D \subs X$ is {\em strongly discrete} if the points in $D$ can be separated by
pairwise disjoint open sets. Note that in a regular space every countable discrete set is strongly discrete.
A $T_1$ space $X$ is called an SD-space if every point
in $X$ is the accumulation point of a strongly discrete set $D \subs X$.

\begin{theorem}[\cite{Sh} or \cite{JSSz1}]\label{tm:sh}
Every SD-space is $\omega$-resolvable. In particular, if $X$ is regular and every point of $X$ is
the accumulation point of a countable discrete set then $X$ is $\omega$-resolvable.
\end{theorem}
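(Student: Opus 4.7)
\medskip

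\textbf{Proof plan.} My plan is to apply El'kin's theorem (Theorem~\ref{tm:el}), which reduces the task to verifying that $\mathfrak{R}_\omega(X)$ is a $\pi$-network in $X$: for every non-empty open $U\subs X$ one has to produce an $\omega$-resolvable subspace $Y\subs U$. The ``in particular'' clause of the theorem then follows automatically, since in a regular $T_1$ space any countable discrete set is strongly discrete --- shrink open neighborhoods of its points one by one, using regularity to pull each new neighborhood away from the points already treated.

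Fix a non-empty open $U\subs X$. The natural construction is a tree of points obtained by iterating the SD-property. Pick $x_\empt\in U$ and a countable strongly discrete set $\{x_{(n)}:n<\omega\}\subs U\setm\{x_\empt\}$ accumulating at $x_\empt$, witnessed by a pairwise disjoint family of open sets $V_{(n)}\subs U\setm\{x_\empt\}$ with $x_{(n)}\in V_{(n)}$. Recursing inside each $V_{(n)}$, and then inside the resulting witnessing open sets, and so on, I obtain a tree $\{x_s:s\in\omega^{<\omega}\}\subs U$ such that, for every $s$, the children $\{x_{s\frown n}:n<\omega\}$ are strongly discrete, accumulate at $x_s$, and are separated by pairwise disjoint open sets $V_{s\frown n}\subs V_s\setm\{x_s\}$. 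The candidate subspace is $Y=\{x_s:s\in\omega^{<\omega}\}$ with the subspace topology: by construction, every node of $Y$ is an accumulation point of the descendants sitting in its subtree.

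The core of the argument is then to partition $Y=\bigsqcup_{n<\omega}A_n$ so that every $A_n$ is dense in $Y$. The main obstacle is combinatorial: a countable strongly discrete sequence accumulating at a point $x_s$ cannot in general be split into $\omega$ pieces each still accumulating at $x_s$ --- if the trace of the neighborhood filter of $x_s$ on the sequence is close to an ultrafilter, then every partition destroys accumulation in all but one piece. So the coloring cannot depend only on the immediate children of a node; it must exploit the branching structure of the whole subtree below it. A workable scheme is a recursive assignment that, at every node $x_s$, arranges each color $n<\omega$ to reappear arbitrarily deep in the subtree below $x_s$ and inside every prescribed neighborhood of $x_s$ in $Y$; verifying that such a scheme genuinely produces dense color classes is the technical heart of the proof. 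Once this is in place, El'kin's theorem delivers the $\omega$-resolvability of $X$.
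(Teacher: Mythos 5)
Your reduction via El'kin's theorem and your tree construction follow the standard route to this result (the paper itself gives no proof, only the citations \cite{Sh}, \cite{JSSz1}), and both steps are sound, as is your justification of the ``in particular'' clause. The genuine gap is exactly where you place it: you never produce the partition of $Y$ into $\omega$ dense pieces, only the promise of a ``recursive assignment'' whose verification you explicitly defer as the technical heart. Moreover, the scheme you sketch --- making each colour reappear ``inside every prescribed neighborhood of $x_s$ in $Y$'' --- would require controlling the neighborhood filter of $x_s$ on the subtree, which is precisely the difficulty you yourself point out cannot be overcome at the level of the children. So, as written, the proposal stops short of a proof.

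The good news is that the obstacle you identify is illusory and the gap is easy to fill: there is no need to split the children of any single node among several colours. Colour by level. Let $L_m=\{x_s:|s|=m\}$; since each $x_s$ is an accumulation point of the set of its children, $L_m\subseteq\overline{L_{m+1}}$ for every $m$, and since the closure operator is idempotent this yields $L_n\subseteq\overline{L_m}$ whenever $n\le m$. (Concretely: an open set $W$ containing $x_s$ contains some child $x_{s\frown n}$, hence, being an open neighborhood of that child, also some grandchild, and so on through every deeper level.) Now partition $\omega$ into infinitely many infinite sets $S_k$ and put $A_k=\bigcup_{m\in S_k}L_m$. Each $A_k$ is dense in $Y$ because $S_k$ is unbounded, and the $A_k$ are pairwise disjoint, so $Y\in\mathfrak{R}_\omega(X)$ and El'kin's theorem finishes the argument. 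One further small point: the paper's definition of SD-space does not require the strongly discrete sets to be countable, so you should either allow arbitrary index sets for the children (the level argument is unaffected) or restrict the countability assumption to the ``in particular'' clause, where it is given.
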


A set $H \subs X$ is called $\kappa$-approachable in $X$ if there are $\kappa$ many {\em pairwise disjoint}
sets $\{X_\alpha : \alpha < \kappa\} \subs [X]^\kappa$ such that
for all $\alpha < \kappa$ we have $$\forall\,Y \in [X_\alpha]^\kappa\,(Y^\circ \ne \emptyset) \text{ and } {X_\alpha}^\circ = H\,.$$

The following statement is the $\kappa = \omega_1$ particular case of Lemma 2.6.(2) in \cite{JSSz2}.
Actually, in \cite{JSSz2} it is assumed that $X$ has countable extent
but it is easy to check that this assumption is not used in the proof of part (2) of Lemma 2.6 of \cite{JSSz2}.

\begin{theorem}\label{tm:app}
For any space $X$ and $A \in [X]^{\omega_1}$, if $|A^\circ| \le \omega$  and $B^\circ \ne \emptyset$
for all $B \in[A]^{\omega_1}$ then $A^\circ$ has a subset which is $\omega_1$-approachable in $X$.
\end{theorem}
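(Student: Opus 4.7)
To witness the $\omega_1$-approachability of some $H \subseteq A^\circ$, I must exhibit $\omega_1$ pairwise disjoint sets $\{A_\alpha : \alpha < \omega_1\} \subseteq [A]^{\omega_1}$ satisfying $A_\alpha^\circ = H$ for every $\alpha$; the accompanying condition $\forall Y \in [A_\alpha]^{\omega_1}(Y^\circ \ne \emptyset)$ is then automatic, since $Y \in [A]^{\omega_1}$ and the hypothesis on $A$ applies.

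The natural two-step reduction is to first produce an uncountable \emph{uniform} subset $A^* \subseteq A$ --- meaning $B^\circ = (A^*)^\circ$ for every $B \in [A^*]^{\omega_1}$ --- and then split $A^*$ into $\omega_1$ pairwise disjoint uncountable pieces $A_\alpha$. Setting $H := (A^*)^\circ$, uniformity immediately gives $A_\alpha^\circ = H$ for each piece, which together with the observation above completes the $\omega_1$-approachability of $H$.

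To find a uniform $A^*$, I use the monotonicity $B_1 \subseteq B_2 \Rightarrow B_1^\circ \subseteq B_2^\circ$ together with the fact that the family $\mathcal{F} := \{B^\circ : B \in [A]^{\omega_1}\}$ consists of nonempty subsets of the countable set $A^\circ$. Any $\subseteq$-minimal element of $\mathcal{F}$, realized by some $A^*$, is automatically uniform: an uncountable $B \subseteq A^*$ yields $B^\circ \in \mathcal{F}$ with $B^\circ \subseteq (A^*)^\circ$, and minimality forces equality. When the minimum cardinality $\min\{|B^\circ| : B \in [A]^{\omega_1}\}$ is finite, any cardinality witness $A^*$ is automatically $\subseteq$-minimal in $\mathcal{F}$, so this sub-case is immediate.

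The main obstacle is the case when every $B^\circ$ is countably infinite. Here $\subseteq$-minimality is not apparent, since nested strictly decreasing chains $B_0^\circ \supsetneq B_1^\circ \supsetneq \cdots$ in $\mathcal{F}$ can arise without a lower bound in $\mathcal{F}$. A resolution would exploit the closure of $\mathcal{F}$ under finite unions --- indeed $(B_1 \cup B_2)^\circ = B_1^\circ \cup B_2^\circ$, since if $x \notin B_i^\circ$ is witnessed by an open neighborhood $U_i$ for $i=1,2$, then $U_1 \cap U_2$ witnesses $x \notin (B_1 \cup B_2)^\circ$ --- together with a recursive construction that eliminates the countably many ``removable'' points of $A^\circ$ one at a time, each time choosing the witness as the complement in $A$ of a suitable open neighborhood to keep intersections uncountable throughout; the countability of $A^\circ$ then caps the recursion and yields a stable uniform $A^*$.
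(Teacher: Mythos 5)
The paper itself does not prove this theorem; it quotes it as the $\kappa=\omega_1$ case of Lemma 2.6(2) of \cite{JSSz2}, so your attempt has to stand on its own. Your framework is sound as far as it goes: if a \emph{uniform} $A^*\in[A]^{\omega_1}$ exists, then splitting it into $\omega_1$ disjoint uncountable pieces does witness the $\omega_1$-approachability of $H=(A^*)^\circ$, and your disposal of the case $\min\{|B^\circ|:B\in[A]^{\omega_1}\}<\omega$ via cardinality-minimality is correct. But the remaining case, where every uncountable $B\subseteq A$ has $B^\circ$ infinite, is not a loose end to be sketched --- it is the entire content of the theorem --- and your proposed recursion does not close it. Removing the removable points of $A^\circ$ one at a time produces $B_{n+1}=B_n\setminus U_n$ for open sets $U_n$; each single step preserves uncountability, but the limit $\bigcap_n B_n=A\setminus\bigcup_n U_n$ can be countable or even empty (think of $A$ arranged as $\omega_1\times\omega$ with $U_n$ swallowing the $n$-th column), so ``the countability of $A^\circ$ caps the recursion'' does not yield an uncountable stable $A^*$. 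Nor can a diagonal set rescue this: any uncountable $C$ with $C\setminus B_n$ countable for all $n$ is contained in $\bigcap_n B_n$ modulo a countable set, so the fusion problem is intrinsic, not an artifact of a careless choice of witnesses.

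There is a second, structural reason the gap is serious. For a uniform $A^*$ one has $(A^*)^\circ=\bigcap\{C^\circ:C\in[A^*]^{\omega_1}\}=\{x:|A^*\setminus U|\le\omega\ \text{for every open}\ U\ni x\}$, and in a Hausdorff space this set has at most one point: two points $x\ne y$ in it would have disjoint neighbourhoods $U,V$ with $A^*=(A^*\setminus U)\cup(A^*\setminus V)$ countable. So a uniform set automatically has a \emph{singleton} core, and consequently in the case you call the main obstacle no uniform set can exist at all; what you actually have to prove there is that this case is vacuous, i.e.\ that some uncountable $B\subseteq A$ has finite (equivalently, one-point) core. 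That is precisely the nontrivial combinatorial-topological step of \cite{JSSz2}, it genuinely needs the separation axioms and the hypothesis that \emph{all} uncountable subsets have nonempty cores in a global way, and it is absent from your proposal.
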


The following result, that will play an essential role below, is a modification of the $\kappa = \omega_1$ particular case
of Theorem 2.7 in \cite{JSSz2}.

\begin{theorem}\label{tm:main}
Let $\mathcal{M}$ with $|\mathcal{M}| \le \omega_1$ be a family of subspaces of a regular space $X$ such that for every
$M \in \mathcal{M}$ there is a fixed $H_M \subs M$ that is $\omega_1$-approachable in $M$.
Assume, moreover, that we may simultaneously fix disjoint families $\{X_{M,\alpha} : \alpha < \omega_1\} \subs [M]^{\omega_1}$
witnessing the $\omega_1$-approachability of $H_M$ in $M$ in such a way that if $\{M,\,N \} \in [\mathcal{M}]^2$ and
$\alpha, \beta < \omega_1$ then $|X_{M,\alpha} \cap X_{N,\beta}| \le \omega$.

Then there are pairwise disjoint sets $D_\alpha \subs \bigcup \mathcal{M}$ for $\alpha < \omega_1$ such that
$\bigcup \{H_M : M \in \bigcup \mathcal{M}\} \subs \overline{D_\alpha}$ for all $\alpha < \omega_1$.
\end{theorem}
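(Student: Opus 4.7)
The plan is to refine each approachability witness $X_{M,\alpha}$ to a cocountable subset $Y_{M,\alpha}$ in such a way that the whole collection $\{Y_{M,\alpha} : M \in \mathcal{M},\, \alpha < \omega_1\}$ is pairwise disjoint, and then simply set $D_\alpha := \bigcup\{Y_{M,\alpha} : M \in \mathcal{M}\}$. Pairwise disjointness of the $D_\alpha$ will then be automatic, and cocountability will guarantee that complete accumulation points survive the refinement, so that $H_M \subs \overline{Y_{M,\alpha}} \subs \overline{D_\alpha}$ for every $M \in \mathcal{M}$ and every $\alpha < \omega_1$.

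For the construction I would enumerate the set of pairs $\mathcal{M} \times \omega_1$ --- which has cardinality $\omega_1$ since $|\mathcal{M}|\le\omega_1$ --- as $\{(M_\xi,\alpha_\xi) : \xi < \omega_1\}$, and recursively define
\[
 Y_\xi \;:=\; X_{M_\xi,\alpha_\xi} \setm \bigcup_{\eta<\xi} Y_\eta .
\]
To estimate $|X_{M_\xi,\alpha_\xi} \setm Y_\xi|$ at stage $\xi$, I would treat each $\eta<\xi$ separately: if $M_\eta = M_\xi$ then $\alpha_\eta \ne \alpha_\xi$, and the approachability witness for $M_\xi$ being a disjoint family forces $X_{M_\eta,\alpha_\eta} \cap X_{M_\xi,\alpha_\xi} = \empt$; if $M_\eta \ne M_\xi$, then the additional hypothesis yields $|X_{M_\eta,\alpha_\eta} \cap X_{M_\xi,\alpha_\xi}| \le \omega$. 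Since $|\xi|\le\omega$, summing over $\eta<\xi$ gives $|X_{M_\xi,\alpha_\xi} \setm Y_\xi| \le \omega$, so each $Y_\xi$ is cocountable in $X_{M_\xi,\alpha_\xi}$ and has cardinality $\omega_1$.

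To close, write $Y_{M,\alpha}$ for the $Y_\xi$ whose label is $(M,\alpha)$. Removing only countably many points from a set of size $\omega_1$ preserves its set of complete accumulation points, so $Y_{M,\alpha}^\circ = X_{M,\alpha}^\circ = H_M$ computed in $M$, which forces $H_M \subs \overline{Y_{M,\alpha}}^M \subs \overline{D_\alpha}^X$ and hence the desired containment $\bigcup\{H_M : M \in \mathcal{M}\} \subs \overline{D_\alpha}$ for every $\alpha<\omega_1$. The only delicate point is the counting step: the whole argument collapses if the cumulative removal at some stage $\xi$ becomes uncountable. It goes through precisely because the two disjointness-type hypotheses cooperate --- pairwise disjointness inside each family $\{X_{M,\alpha}:\alpha<\omega_1\}$ kills the $M_\eta = M_\xi$ contributions entirely, while the countable-intersection hypothesis bounds the cross contributions at $\omega$ each --- together with the fact that every $\xi<\omega_1$ is countable. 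Regularity of $X$ plays no explicit role in the argument itself; it enters only via the hypotheses that produce the approachability witnesses.
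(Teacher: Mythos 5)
Your proposal is correct and follows essentially the same route as the paper: the paper invokes the standard fact that an almost disjoint family of $\omega_1$ many sets of size $\omega_1$ is \emph{essentially disjoint} (each member has a co-countable subset so that the refined family is pairwise disjoint), and then observes that passing to co-countable subsets preserves complete accumulation points, exactly as you do. Your transfinite recursion merely spells out the proof of that essential-disjointness step, which the paper leaves implicit, and the rest of your argument matches the paper's.
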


\begin{proof}
Observe that, by assumption, the collection $$\mathcal{X} = \{X_{M,\alpha}  : M \in \mathcal{M}, \alpha < \omega_1\}$$ is almost disjoint.
But $|\mathcal{X}| = \omega_1$ then implies that $\mathcal{X}$ is even essentially
disjoint, i.e. every $X_{M,\alpha} \in \mathcal{X}$ has a {\em co-countable} subset $Y_{M,\alpha}$ such that the family
$\mathcal{Y} = \{Y_{M,\alpha}  : M \in \mathcal{M}, \alpha < \omega_1\}$ is already disjoint.
Then for every pair $\langle M,\alpha \rangle \in \mathcal{M} \times \omega_1$ we clearly have $M \cap (X_{H,\alpha})^\circ = M \cap (Y_{H,\alpha})^\circ = H_M$.
But then if we put $D_\alpha = \bigcup_{M \in \mathcal{M}}Y_{M,\alpha}$ for any $\alpha < \omega_1$, then we have $\bigcup \{H_M : M \in \mathcal{M}\} \subs \overline{D_\alpha}$,
hence the pairwise disjoint sets $D_\alpha$ for $\alpha < \omega_1$ are as required.

\end{proof}

Of course, if $\bigcup \{H_M : M \in \mathcal{M}\}$ is dense in $X$ then this
yields the $\omega_1$-resolvability of $X$.
The final result of this introductory section will be used to obtain such situations.

\begin{theorem}\label{tm:pinw}
Assume that $\mathcal{L}$ is a $\pi$-network of a regular space $X$ such that every $L \in \mathcal{L}$
has a subset $H_L$ that is $\omega_1$-approachable in $L$,  witnessed by
the disjoint family $\{X_{L,\alpha} : \alpha < \omega_1\} \subs [L]^{\omega_1}$.
Let $\mathcal{K}$ be a {\em maximal} subfamily of $\mathcal{L}$
such that for any two $K, L\ \in \mathcal{K}$ and any $\alpha, \beta < \omega_1$ we have $|X_{K,\alpha} \cap X_{L,\beta}| \le \omega$.
Then $\bigcup \{H_K : K \in \mathcal{K}\}$ is dense in $X$.
\end{theorem}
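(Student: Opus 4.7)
The plan is to argue by contradiction. Suppose that $D := \bigcup\{H_K : K \in \mathcal{K}\}$ is not dense in $X$, so some $U \in \tau^+(X)$ is disjoint from $D$. Using regularity of $X$, I would first shrink $U$: pick $V \in \tau^+(X)$ with $\overline V \subs U$. The $\pi$-network property of $\mathcal L$ then yields some $L \in \mathcal L$ with $L \subs V$, hence $\overline L \subs \overline V \subs U$. The target becomes producing a $K \in \mathcal K$ and a point $p \in H_K \cap \overline L$: any such $p$ would sit in $U \cap D$, contradicting our assumption.

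If $L \in \mathcal K$ there is nothing more to do, because the very definition of $\omega_1$-approachability forces $H_L = X_{L,0}^\circ$ to be non-empty, and $H_L \subs L \subs U \cap D$. Otherwise the maximality of $\mathcal K$ forbids adjoining $L$, so there exist $K \in \mathcal K\setm\{L\}$ and ordinals $\alpha, \beta < \omega_1$ with $|X_{K,\alpha} \cap X_{L,\beta}| > \omega$, i.e.\ of size $\omega_1$. Set $Y := X_{K,\alpha} \cap X_{L,\beta}$.

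Now I would unwind the witness condition for the $\omega_1$-approachability of $H_K$ in $K$: since $Y$ is an $\omega_1$-subset of $X_{K,\alpha}$, the set $Y^\circ$ of complete accumulation points of $Y$ in $K$ is non-empty, and any member of $Y^\circ$ is automatically a complete accumulation point of the larger set $X_{K,\alpha}$ as well, hence lies in $X_{K,\alpha}^\circ = H_K$. Fix any such $p$. Because $Y \subs K$, ``complete accumulation point of $Y$ in $K$'' and ``complete accumulation point of $Y$ in $X$'' agree, so $p \in \overline Y$ with the closure taken in $X$. Combining this with $Y \subs X_{L,\beta} \subs L$ gives $p \in \overline L \subs U$, and hence $p \in U \cap H_K \subs U \cap D$, the desired contradiction. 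The only mild pitfall in the argument is keeping straight where complete accumulation points are computed (in $K$, in $L$, or in $X$); this is settled automatically by the inclusion $Y \subs K$.
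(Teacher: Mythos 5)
Your proof is correct and follows essentially the same route as the paper's: shrink $U$ by regularity, pick $L \in \mathcal{L}$ inside the shrunken set so that $\overline{L} \subs U$, invoke maximality of $\mathcal{K}$ to obtain an uncountable $C = X_{K,\alpha} \cap X_{L,\beta}$, and note that $\emptyset \ne C^\circ \subs H_K \cap \overline{L} \subs U \cap \bigcup\{H_K : K \in \mathcal{K}\}$. The contradiction framing and the separate treatment of the case $L \in \mathcal{K}$ (which the paper absorbs by allowing $K=L$, $\alpha=\beta$) are only cosmetic differences.
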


\begin{proof}
For any $U \in \tau^+(X)$ there is $L \in \mathcal{L}$ such that $\overline{L} \subs \mathcal{L}$.
By the maximality of $\mathcal{K}$ then there are $K \in \mathcal{K}$ and $\alpha, \beta < \omega_1$ for which
$|X_{K,\alpha} \cap X_{L,\beta}| = \omega_1$. Put $C = X_{K,\alpha} \cap X_{L,\beta}$.
Then, on one hand, we have $C^\circ \cap H_K \ne \emptyset$, and on the other $C^\circ \subs \overline{L} \subs U$.
Consequently, we have $\emptyset \ne C^\circ \cap H_K \subs U \cap H_K$.

\end{proof}

\section{Lindelöf-generated spaces}

\bigskip

For the sake of brevity, in the rest of our paper {\em space} always means {\em regular space}.
The aim of this section is to prove the following strengthening of Theorem \ref{tm:L}.

\begin{theorem}\label{tm:lindelof-generated}
Any Lindelöf-generated space  $X$ with $|X|=\Delta(X)={\omega}_1$ is ${\omega}_1$-resolvable.
\end{theorem}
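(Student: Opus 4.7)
The plan is to combine a case analysis on the tightness of $X$ with Theorems \ref{tm:pinw} and \ref{tm:main}. In the easy case $t(X) \le \omega$, since $X$ is regular Hausdorff with $\Delta(X) = \omega_1 > \omega$, Pytkeev's Theorem \ref{tm:Pt} applies directly and yields $\omega_1$-resolvability. In the remaining case $t(X) > \omega$, I would use the Lindel\"of-generated hypothesis to produce uncountable Lindel\"of subspaces via the following key observation: if $A \subs X$ with $|A| = \omega_1$ is not closed in $X$ but every countable subset of $A$ is closed in $X$, then any Lindel\"of witness $Y$ for the non-closedness of $A$ must satisfy $|A \cap Y| = \omega_1$, hence $|Y| = \omega_1$ (indeed, if $|A \cap Y| \le \omega$, then by hypothesis $A \cap Y$ is closed in $X$ hence in $Y$, contradicting the choice of $Y$). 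The non-countable-tightness assumption makes such $A$'s available inside $\overline{U}$ for any non-empty open $U$, via a transfinite construction along a point witnessing $t(X) > \omega$.

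Using this, I would prove the key lemma: for every $U \in \tau^+(X)$ there is a Lindel\"of subspace $L \subs \overline{U}$ of $X$ with $|L| = \omega_1$ carrying an $\omega_1$-approachable subset $H_L \subs L$ in $L$, together with a witnessing disjoint family in $[L]^{\omega_1}$. The subspace $L$ is obtained as $Y \cap \overline{U}$, where $Y$ is the Lindel\"of witness furnished above for some $A \subs \overline{U}$ of size $\omega_1$: this $L$ is closed in $Y$ hence Lindel\"of, lies in $\overline{U}$, and has $|L| \ge |A \cap Y| = \omega_1$. The approachable subset $H_L$ is then obtained by applying Theorem \ref{tm:app} inside $L$: since $L$ is Lindel\"of of size $\omega_1$, every $B \in [L]^{\omega_1}$ has a complete accumulation point in $L$ automatically, and a suitably thin $A' \in [L]^{\omega_1}$ with only countably many complete accumulation points in $L$ is extracted from the Lindel\"of structure of $L$ (e.g., from a long convergent sequence).

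With $\mathcal{L}$ chosen as a $\pi$-network of Lindel\"of subspaces produced this way, of cardinality $\le \omega_1$ (indexed over a $\pi$-base of that size, one $L$ per element), Theorem \ref{tm:pinw} extracts a maximal subfamily $\mathcal{K} \subs \mathcal{L}$ with pairwise almost-disjoint witnesses for which $\bigcup \{H_K : K \in \mathcal{K}\}$ is dense in $X$. Since $|\mathcal{K}| \le \omega_1$, Theorem \ref{tm:main} applies to $\mathcal{K}$ and yields pairwise disjoint sets $\{D_\alpha : \alpha < \omega_1\}$ with $\bigcup_{K \in \mathcal{K}} H_K \subs \overline{D_\alpha}$, so each $D_\alpha$ is dense in $X$ and $X$ is $\omega_1$-resolvable.

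The main obstacle is the actual construction of a non-closed $A \subs \overline{U}$ of size $\omega_1$ with all countable subsets closed in $X$, localized to any prescribed $\overline{U}$: this is where the interplay between the non-countable-tightness hypothesis, $\Delta(X) = \omega_1$, and the regularity of $X$ must be worked out. The observation reducing Lindel\"of witnesses to uncountable ones is the key combinatorial insight, but pinning down such an $A$ inside $\overline{U}$ for every $U$ is the main technical step.
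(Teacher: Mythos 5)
Your overall architecture (Lindelöf witnesses for $\omega$-closed non-closed sets, $\omega_1$-approachable subsets, then Theorems \ref{tm:pinw} and \ref{tm:main}) is the paper's, and your observation that a Lindelöf witness $Y$ for an $\omega$-closed non-closed $A$ must meet $A$ in an uncountable set is exactly the right one. But there are two genuine gaps. First, your dichotomy on the global tightness $t(X)$ does not localize: $t(X)>\omega$ gives \emph{one} point of uncountable tightness, not one in every $\overline{U}$, so your ``main technical step'' of producing a non-closed $\omega$-closed $A$ inside $\overline{V}\subs U$ for \emph{every} nonempty open $U$ fails if some open subset of $X$ has countable tightness while another does not; and Pytkeev applied to that subset only makes it, not $X$, resolvable. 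The missing tool is El'kin's Theorem \ref{tm:el}: since every nonempty open subspace again satisfies the hypotheses, it suffices to show $\mathfrak{R}_{\omega_1}(X)\ne\emptyset$, and hence one may assume that $\{x : t(x,X)=\omega_1\}$ is \emph{dense} (otherwise Pytkeev gives an $\omega_1$-resolvable open subspace and El'kin finishes). Second, and more seriously, the construction of the $\omega_1$-approachable set $H_L$ is hand-waved: Theorem \ref{tm:app} needs an uncountable $A\subs L$ with only \emph{countably many} complete accumulation points in $L$, and ``a long convergent sequence'' is not available in a general Lindelöf space of size $\omega_1$ (e.g.\ in a hereditarily Lindelöf $L$ of size $\omega_1$ every uncountable set has uncountably many complete accumulation points). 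The paper obtains such an $A$ from a second reduction: if some uncountable Lindelöf $L\subs X$ has $\Delta(L)=\omega_1$, then $L$ is $\omega_1$-resolvable by Theorem \ref{tm:L} and El'kin finishes; otherwise every uncountable Lindelöf subspace --- in particular $L\cap L^\circ$ when it is uncountable --- has a countable nonempty relatively open piece, which yields an open $U$ with $0<|U\cap L\cap L^\circ|\le\omega$ and hence $A=L\cap\overline{V}$ with countably many complete accumulation points in $L$. This reduction via Theorem \ref{tm:L} is the heart of the proof and has no substitute in your proposal.

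A smaller issue: you bound $|\mathcal{L}|\le\omega_1$ by indexing over ``a $\pi$-base of that size,'' but $\pi w(X)\le\omega_1$ does not follow from $|X|=\omega_1$. The paper avoids this by applying Theorem \ref{tm:pinw} to the full (possibly large) maximal family $\mathcal{K}$, noting that the resulting dense set $D=\bigcup\{H_K : K\in\mathcal{K}\}$ has cardinality at most $|X|=\omega_1$, and only then extracting $\mathcal{M}\subs\mathcal{K}$ with $|\mathcal{M}|\le\omega_1$ and $D=\bigcup\{H_M : M\in\mathcal{M}\}$, to which Theorem \ref{tm:main} is applied.
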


\begin{proof}
Clearly, every $Y \in \tau^+(X)$ satisfies the assumptions of our theorem, hence by Theorem
\ref{tm:el} it suffices to show that $\mathfrak{R}_{\omega_1}(X) \ne \emptyset$.
Consequently, we may assume that $T = \{x \in X : t(x,X) = \omega_1\}$ is dense in $X$,
since otherwise there is some $U \in \tau^+(X)$ with $t(U) = \omega < \Delta(U) = \omega_1$,
and hence $U \in \mathfrak{R}_{\omega_1}(X)$ by Theorem \ref{tm:Pt}.

Let us now put $\mathcal{L} = \{L \in [X]^{\omega_1} : L \text{ is Lindelöf}\}$.
If there is $L \in \mathcal{L}$ such that $\Delta(L) = \omega_1$ then by Theorem \ref{tm:L}
we have $L \in \mathfrak{R}_{\omega_1}(X)$ and we are done.
Consequently, we may assume that for every $L \in \mathcal{L}$ we have $\Delta(L) < \omega_1$.
We claim that then every $L \in \mathcal{L}$ has a subset $H$ which is $\omega_1$-approachable in $L$.

Indeed, for every $L \in \mathcal{L}$ there is some $U \in \tau^+(X)$ such that
$0 < |U \cap L \cap L^\circ| \le \omega$. If $|L \cap L^\circ| \le \omega$ then
we may choose $U = X$. Of course, $L \cap L^\circ \ne \emptyset$ because $L$ is Lindelöf.
And if $|L \cap L^\circ| = \omega_1$ then $L \cap L^\circ \in \mathcal{L}$,
hence $\Delta(L \cap L^\circ) < \omega_1$ yields such an open set $U$.

Then we may pick a point $x \in U \cap L \cap L^\circ$  and
an open set $V$ such that $x \in V \subs \overline{V} \subs U$. But then $A = L \cap \overline{V}$
is uncountable and Lindelöf, moreover $L \cap A^\circ \subs U \cap L \cap L^\circ$ is countable.
This means that $A$ has only countably many complete accumulation points in $L$.
But then $A$ satisfies the assumptions of Theorem \ref{tm:app} in $L$, and thus
$L \cap A^\circ$ has a subset $H$ which is $\omega_1$-approachable in $L$.

Next we show that $\mathcal{L}$ is a $\pi$-network of $X$.
This is where we shall use that $T$ is dense in $X$.
So, pick any $U \in \tau^+(X)$ and a point $x \in T \cap U$. We may then pick an open set $V$ such that $x \in V \subs \overline{V} \subs U$.
Then $t(x,X) = t(x,\overline{V}) = \omega_1$ clearly implies the existence of a set $A \subs \overline{V}$ that is
$\omega$-closed, i.e. for every $B \in [A]^\omega$ we have $\overline{B} \subs A$, but $x \in \overline{A} \setm A$.
Thus $A$ is not closed, hence there is a Lindelöf set $L \subs X$ such that $L \cap A$ is not closed in $L$.
Since $A \subs \overline{V}$ we may clearly assume that $L \subs \overline{V} \subs U$ as well.
But then $L \cap A$ cannot be countable because $A$ is $\omega$-closed, hence $L  \in \mathcal{L}$.

Let us now fix for each $L \in \mathcal{L}$ a subset $H_L$ which is $\omega_1$-approachable in $L$, as well as
a disjoint family $\{X_{L,\alpha} : \alpha < \omega_1\} \subs [L]^{\omega_1}$
witnessing the $\omega_1$-approachability of $H_L$ in $L$. Next, let $\mathcal{K}$ be a {\em maximal} subfamily of $\mathcal{L}$
such that if $K,\,L$ are distinct members of $\mathcal{K}$ then $|X_{K,\alpha} \cap X_{L,\beta}| \le \omega$ for all $\alpha, \beta < \omega_1$.
By Theorem \ref{tm:pinw} then $D = \bigcup \{H_K : K \in \mathcal{K}\}$ is dense in $X$.

But $|D| \le |X| = \omega_1$ clearly implies that there is $\mathcal{M} \subs \mathcal{K}$ such that $D = \bigcup \{H_M : M \in \mathcal{M}\}$.
Now, we may apply Theorem \ref{tm:main} to the family $\mathcal{M}$ to obtain
the pairwise disjoint sets $D_\alpha$ for $\alpha < \omega_1$ satisfying $D \subs \overline{D_\alpha}$.
This, however, means that each $D_\alpha$ is dense in $X$, consequently $X$ is ${\omega}_1$-resolvable.

\end{proof}

\section{(Countable extent)-generated spaces}

\bigskip

The aim of this section is to prove the following result.

\begin{theorem}\label{tm:c.e.-generated}
Any (countable extent)-generated space  $X$ satisfying $\Delta(X)>{\omega}$ is ${\omega}$-resolvable.
\end{theorem}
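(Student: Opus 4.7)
The plan is to follow the proof of Theorem~\ref{tm:lindelof-generated} mutatis mutandis, with countable extent replacing Lindelöfness, $\omega$-resolvability replacing $\omega_1$-resolvability, and Theorem~\ref{tm:e} playing the role that Theorem~\ref{tm:L} played there. By Theorem~\ref{tm:el} and the fact that (countable extent)-generatedness passes to open subspaces in regular spaces, it suffices to show $\mathfrak{R}_\omega(X) \ne \empt$. Furthermore, by Theorem~\ref{tm:Pt} we may assume that $T = \{x \in X : t(x,X) > \omega\}$ is dense in $X$, since otherwise some non-empty open subspace has countable tightness and uncountable dispersion character, hence is $\omega_1$-resolvable and we are done.

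For any $U \in \tau^+(X)$, pick $x \in T \cap U$ and $V \in \tau^+(X)$ with $x \in V \subs \overline V \subs U$. The hypothesis $t(x,X) > \omega$ allows the construction of an $\omega$-closed set $\hat A \subs V$ (meaning $\overline B \subs \hat A$ for every countable $B \subs \hat A$) with $x \in \overline{\hat A} \setm \hat A$. Since $\hat A$ is not closed in $X$, the (countable extent)-generatedness hypothesis produces some $L \subs X$ of countable extent such that $\hat A \cap L$ is not closed in $L$; setting $E = L \cap \overline V$ yields a subspace of $U$ of countable extent in which $\hat A \cap E = \hat A \cap L$ is uncountable, as otherwise its closure would lie inside $\hat A$ by $\omega$-closedness, contradicting the existence of a witness $y \in L \cap \overline{\hat A \cap L} \setm \hat A$.

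My plan is to find an $\omega$-resolvable subspace inside $E$. When $\Delta(E) > \omega$, Theorem~\ref{tm:e} applies directly and $E$ itself is $\omega$-resolvable. When $\Delta(E) = \omega$, I would pass to a ``kernel'' $E^*$ via transfinite iteration of the operation ``remove all points with a countable neighborhood in the current stage'': set $E_0 = E$, let $E_{\alpha+1} = E_\alpha \setm S_\alpha$ where $S_\alpha$ denotes the (open) subset of $E_\alpha$ consisting of points with countable neighborhoods in $E_\alpha$, and set $E_\lambda = \bigcap_{\alpha < \lambda} E_\alpha$ at limits. Each $E_\alpha$ is closed in $E$ and hence of countable extent, and the stable stage $E^*$ satisfies $\Delta(E^*) > \omega$ whenever it is non-empty, so Theorem~\ref{tm:e} then applies.

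The main obstacle is to rule out $E^* = \empt$. My attempted strategy is to track the witness $y$ through the iteration: as long as $y \in E_\alpha$ and $y \in \overline{\hat A \cap E_\alpha}$, the $\omega$-closedness of $\hat A$ combined with $y \notin \hat A$ forces $t(y, E_\alpha) > \omega$, which precludes $y$ from having any countable neighborhood in $E_\alpha$, so $y \in E_{\alpha+1}$. The delicate point is preserving the cluster relation $y \in \overline{\hat A \cap E_\alpha}$ throughout the iteration, since removing all countable-neighborhood points of $\hat A \cap E_\alpha$ that cluster near $y$ could in principle disconnect $y$ from $\hat A \cap E_{\alpha+1}$. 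Handling this carefully---possibly by strengthening the choice of $\hat A$ so that every point of $\hat A$ itself has only uncountable neighborhoods in $X$, or by invoking Theorem~\ref{tm:sh} in lieu of the kernel argument when the iteration stalls---is the main technical work of the proof.
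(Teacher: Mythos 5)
There is a genuine gap, and it sits exactly where you place your ``main technical work'': the case $\Delta(E)=\omega$. Your setup (El'kin, Pytkeev, an $\omega$-closed non-closed set $\hat A$, a countable-extent $L$ with $\hat A\cap L$ not closed) matches the opening moves of the paper, but the subspace you land in is typically of the worst possible kind: $\overline{\hat A\cap L}\cap L$ has $\hat A\cap L$ as an $\omega$-closed \emph{dense} subset, so after one step of your kernel iteration every point of $\hat A\cap L$ that is isolated in this subspace is discarded, and in the extremal case (which the paper isolates as the class $\mathcal{Z}_2$: a dense $\omega$-closed set of isolated points) the first step already removes the entire dense set $\hat A\cap L$. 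What remains is a closed set of accumulation points which, by Theorem \ref{tm:e}, must again have countable dispersion character (else you would be done), and the iteration can whittle $E$ down to $\emptyset$. Your device for ruling this out---tracking the witness $y$---fails for the reason you yourself flag: $y\in\overline{\hat A\cap E_\alpha}$ need not persist, since all points of $\hat A\cap E_\alpha$ clustering at $y$ may lie in $S_\alpha$. Neither proposed repair works as stated: one cannot arrange that every point of $\hat A$ has only uncountable neighborhoods (that contradicts the $\mathcal{Z}_2$ phenomenon above), and Theorem \ref{tm:sh} needs every point to be an accumulation point of a countable \emph{discrete} set, which is exactly what fails on the surviving part.

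The paper's actual proof does not try to locate a single $\omega$-resolvable subspace inside one countable-extent piece. Instead it assumes $\mathfrak{R}_\omega(X)=\emptyset$ globally and builds $\omega$ many pairwise disjoint dense subsets of $X$ itself: it iterates a different operation (removing accumulation points of countable discrete sets, which by Theorem \ref{tm:sh} must terminate in finitely many steps, yielding ``good partitions''), refines the bad subspaces down to the class $\mathcal{Z}_3$ (countable extent, a dense $\omega$-closed set of $\omega_1$ isolated points with only countably many complete accumulation points), extracts $\omega_1$-approachable sets and almost disjoint witnessing families (Theorems \ref{tm:app}, \ref{tm:main}, \ref{tm:pinw}), and finally iterates Lemma \ref{lm:Y(X)} to peel off disjoint dense sets $Y_n\setminus Y_{n+1}$. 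That machinery is precisely what replaces the step your proposal leaves open, so the proposal as it stands does not constitute a proof.
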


However, before starting the proof of this we shall need to present a number of preparatory lemmas.
We start with a definition.

\begin{definition}\label{def:good}
We say that an indexed partition $\{Z_i:i<n\}$ of the space $Z$ into finitely many subsets
is {\em good} iff
\begin{align*}
 \forall i< n\ \forall D\in \diso(Z_i)\ D'\cap \bigcup_{j\le i}Z_j=\empt.
\end{align*}
\end{definition}

Clearly, if $\{Z_i:i<n\}$ is a good partition of $Z$ and $Y \subs Z$
then $\{Y \cap Z_i:i<n\}$ is a good partition of $Y$.

\begin{lemma}\label{lm:goodpartition}
Let $X$ be any space with $\mathfrak{R}_\omega(X) = \emptyset$. Then $X$ has a non-empty open subspace $Z$
which admits a good partition $Z = \cup \{Z_i:i<n\}$. Consequently, the open subsets of $X$ admitting a good partition
form a $\pi$-base in $X$.
\end{lemma}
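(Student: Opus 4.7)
The plan is to reduce the entire statement to producing a single non-empty open $Z\subs X$ admitting a good partition: the $\pi$-base conclusion then follows by applying the first claim to an arbitrary non-empty open $U\subs X$, since $\mathfrak{R}_\omega(U)=\emptyset$ is inherited from $X$. So I focus on finding one such $Z$.

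The tool is a Cantor--Bendixson-style iteration using the operator $Y\mapsto Y\cap L_\omega(Y)$, where $L_\omega(Y)=\bigcup\{D':D\in\dis_\omega(Y)\}$. Define $X_0=X$, $X_{\alpha+1}=X_\alpha\cap L_\omega(X_\alpha)$, and $X_\lambda=\bigcap_{\alpha<\lambda}X_\alpha$ at limits. Each non-empty $X_\alpha$ is a regular space inheriting $\mathfrak{R}_\omega=\emptyset$, so by Theorem \ref{tm:sh} it is not an SD-space, giving $X_{\alpha+1}\subsetneq X_\alpha$. Hence the iteration reaches $\emptyset$ at some ordinal $\rho$ and the layers $L_\alpha:=X_\alpha\setm X_{\alpha+1}$ partition $X$.

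The key observation is: if some $X_n$ with $n<\omega$ fails to be dense in $X$, then $V:=X\setm\overline{X_n}$ is a non-empty open set with $V\cap X_n=\emptyset$, and the finite family $\{V\cap L_i:i<n\}$ is a good partition of $V$. For the verification, take any $D\in\dis_\omega(V\cap L_i)$ and suppose $z\in D'\cap V\cap L_j$ with $j\le i$: transitivity of the subspace topology gives that $D\subs X_i\subs X_j$ is countable discrete in $X_j$, and $z\in X_j$ is an accumulation of $D$ in $X_j$, forcing $z\in X_j\cap L_\omega(X_j)=X_{j+1}$, which contradicts $z\in L_j=X_j\setm X_{j+1}$.

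The main obstacle is to show that some $X_n$ with $n<\omega$ is indeed not dense in $X$. I would attempt this by contradiction: assume every $X_n$ is dense. Using regularity (which makes countable discrete sets strongly discrete) together with the identity $L_\omega(W)\cap W=X_1\cap W$ for any open $W$, one aims to transfer accumulation points of countable discrete sets of $X$ into accumulations inside each dense subspace $X_n$, eventually producing a subspace of $X$ that is an SD-space---hence $\omega$-resolvable by Theorem \ref{tm:sh}, contradicting $\mathfrak{R}_\omega(X)=\emptyset$. The real subtlety, which I expect forms the technical heart of the argument, is carrying out this transfer without first-countability: it requires choosing the separating open sets of the strongly discrete families small enough that, after substituting points into them to land inside $X_1$ (and then $X_2$, and so on), the chosen accumulation point is preserved.
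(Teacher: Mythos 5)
Your iteration is the paper's own, just written from the other side: your layer $L_\alpha=X_\alpha\setm X_{\alpha+1}$ is exactly the paper's $Y_\alpha=S(X_\alpha)$, where $S(Y)=Y\setm\bigcup\{D':D\in\mathcal{D}_\omega(Y)\}$; your $V=X\setm\overline{X_n}$ is the paper's $Z=\operatorname{Int}\bigl(\bigcup_{j<n}Y_j\bigr)$; and your verification that $\{V\cap L_i:i<n\}$ is a good partition is correct and identical in substance to the paper's. The genuine gap is precisely where you flag it: you never prove that some $X_n$ with $n<\omega$ fails to be dense in $X$, and the strategy you sketch for this step --- a ``transfer'' of accumulation points into the dense subspaces $X_n$ so as to manufacture an SD-subspace --- is not how the argument goes and would run into exactly the difficulties you anticipate.

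The missing idea is that Theorem \ref{tm:sh} yields much more than $S(X_\alpha)\ne\emptyset$: it yields that $S(Y)$ is \emph{dense} in $Y$ for every subspace $Y\subs X$. Indeed, if $S(Y)$ were not dense in $Y$, then $U=Y\setm\overline{S(Y)}$ would be a non-empty relatively open subspace of $Y$ in which every point is an accumulation point of a countable discrete subset of $Y$, hence (as $U$ is open in $Y$) of a countable discrete --- and by regularity strongly discrete --- subset of $U$; so $U$ would be an SD-space, hence $\omega$-resolvable, contradicting $\mathfrak{R}_\omega(X)=\emptyset$. With this strengthening your ``main obstacle'' evaporates: if every $X_n$ with $n<\omega$ were dense in $X$, then each layer $L_n=S(X_n)$, being dense in the dense subspace $X_n$, would be dense in $X$, and the $L_n$ are pairwise disjoint, so $X$ itself would be $\omega$-resolvable --- again contradicting $\mathfrak{R}_\omega(X)=\emptyset$. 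No transfinite stages and no transfer argument are needed; a first $n<\omega$ with $X_n$ not dense must already exist.
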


\begin{proof}
Let us define for any space $Y$ the operation $S(Y)$ by the formula $$S(Y) = Y \setm \cup \{D' : D \in \mathcal{D}_\omega(Y)\}\,.$$
In other words, $S(Y)$ is the set of all points in $Y$ to which no countable discrete set accumulates in $Y$.
Note that if $S(Y)$ is not dense in $Y$ then in the non-empty open subspace $U = Y \setm \overline{S(Y)}$ every point is the limit
of a countable discrete set, hence by Theorem \ref{tm:sh} $U$ is $\omega$-resolvable.

Since now we have $\mathfrak{R}_\omega(X) = \emptyset$, it follows that $S(Y)$ is dense in $Y$ for every $Y \subs X$.
Let us define the sets $\{Y_i : i < \omega\}$ with the following recursive formula:
$$ Y_i=S(X\setm \bigcup_{j<i}Y_j).$$
Then $Y_0 = S(X)$ is dense in $X$ and the sets $Y_i$ are pairwise disjoint, hence they cannot all be dense in $X$. Thus  there is a smallest
index $n > 0$ such that $X\setm \bigcup_{j<n}Y_j$ is not dense in $X$.  Clearly, then
$Z = Int(\bigcup_{j<n}Y_j)$ is as required with $Z_i = Z \cap Y_i$ for $i < n$.

This completes the proof of the first part of our lemma.
But we may clearly apply the first part of the lemma to any non-empty open subspace of $X$ to obtain the second.

\end{proof}

The following lemma actually does not need the regularity of $X$.

\begin{lemma}\label{lm:s<z}
If $X$ is any space satisfying $s(X) = \omega < d(X)$ then $\mathfrak{R}_{\omega_1}(X) \ne \emptyset$.
\end{lemma}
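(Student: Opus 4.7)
The plan is to produce an uncountable $A\subseteq X$ meeting the hypotheses of Theorem \ref{tm:app}, and then upgrade the $\omega_1$-approachable set it supplies into an $\omega_1$-resolvable subspace by invoking the essentially-disjoint refinement from the proof of Theorem \ref{tm:main}.

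First I would exploit $d(X)>\omega$ to fix, by a standard transfinite selection, a left-separated sequence $A=\{x_\alpha:\alpha<\omega_1\}$ together with open witnesses $U_\alpha\ni x_\alpha$ satisfying $U_\alpha\cap\{x_\beta:\beta<\alpha\}=\emptyset$. The main combinatorial step is to show that every $B\in[A]^{\omega_1}$ has $B^\circ\neq\emptyset$. Suppose otherwise: for each $y\in X$ pick an open $V_y\ni y$ with $|V_y\cap B|\leq\omega$, and recursively choose $y_\xi=x_{\alpha(\xi)}\in B\setminus\bigcup_{\eta<\xi}V_{y_\eta}$ for $\xi<\omega_1$, which is possible because the exclusion set meets $B$ in only countably many points. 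Since $\omega_1$ admits no uncountable decreasing chain, the injection $\xi\mapsto\alpha(\xi)$ has an $\omega_1$-increasing subsequence; on such a subsequence the left-separation of $A$ is inherited while the $V_{y_\xi}$'s simultaneously provide right-separation, producing a discrete subspace of cardinality $\omega_1$ and contradicting $s(X)=\omega$.

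With every uncountable subset of $A$ now guaranteed a complete accumulation point, I would arrange $|A^\circ|\leq\omega$ by thinning $A$ if necessary (using that $A^\circ$ is closed with $s(A^\circ)\leq\omega$, one can iterate the construction inside $A^\circ$ or run a transfinite thinning), so that Theorem \ref{tm:app} delivers $H\subseteq A^\circ$ that is $\omega_1$-approachable in $X$, witnessed by an almost disjoint family $\{X_\alpha:\alpha<\omega_1\}\subseteq[X]^{\omega_1}$ with $X_\alpha^\circ=H$. The essentially-disjoint trick from the proof of Theorem \ref{tm:main} then yields pairwise disjoint cocountable $D_\alpha\subseteq X_\alpha$ with $H\subseteq\overline{D_\alpha}$ for every $\alpha$, and a suitable subspace built from $H$ together with the $D_\alpha$'s is resolved by $\{D_\alpha\}_{\alpha<\omega_1}$ into $\omega_1$ pairwise disjoint dense pieces, yielding $\mathfrak{R}_{\omega_1}(X)\neq\emptyset$.

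The step I expect to be the principal obstacle is the thinning that delivers $|A^\circ|\leq\omega$: passing from an arbitrary left-separated $A$ to such a subset while preserving the property that every further uncountable sub-subset still has non-empty $^\circ$ (and doing so without any appeal to regularity, as flagged in the remark preceding the lemma) requires delicate bookkeeping. A secondary technical point is pinpointing the precise subspace on which all the $D_\alpha$'s become simultaneously dense, so that the pairwise disjoint family actually realizes $\omega_1$-resolvability there.
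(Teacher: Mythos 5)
Your route is genuinely different from the paper's, which disposes of the lemma in four lines: take a maximal disjoint family $\mathcal G$ of countable open sets (countable because $s(X)=\omega$ gives CCC), note that $\bigcup\mathcal G$ is countable and hence, as $d(X)>\omega$, not dense, and observe that $U=\operatorname{Int}(X\setminus\overline{\bigcup\mathcal G})$ is a non-empty open set with $\Delta(U)>\omega$ and $s(U)=\omega$, hence maximally (so $\omega_1$-) resolvable by Theorem 2.2 of \cite{JSSz}. Your second step --- that every $B\in[A]^{\omega_1}$ has $B^\circ\ne\emptyset$, via combining left-separation of $A$ with the right-separating neighborhoods $V_{y_\xi}$ along an increasing subsequence --- is correct. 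But the two steps you yourself flag are genuine gaps, not bookkeeping, and the second is fatal to the plan as stated.

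On the thinning to $|A^\circ|\le\omega$: everywhere the paper needs such a bound it extracts it from an irresolvability or smallness hypothesis on the set of complete accumulation points ($\Delta(L\cap L^\circ)<\omega_1$ in Theorem \ref{tm:lindelof-generated}, Theorem \ref{tm:e} applied to $D^\circ$ in Lemma \ref{lm:Z1Z3}). You have no such hypothesis here; if $\Delta(A^\circ)>\omega$ you are facing a closed subspace with exactly the data you started from ($s\le\omega<d$), so ``iterate inside $A^\circ$'' is an infinite regress, not an argument. More seriously, a single $\omega_1$-approachable set does not yield a resolvable subspace. Theorem \ref{tm:app} hands you a countable $H$ and disjoint sets $X_\alpha$ with $X_\alpha^\circ=H$, so $H\subseteq\overline{D_\alpha}$ for cocountable $D_\alpha\subseteq X_\alpha$; but for the $D_\alpha$ to be simultaneously dense in some subspace $Z$ you need $Z\subseteq\overline{D_\alpha\cap Z}$ for every $\alpha$, and the only points guaranteed to lie in every $\overline{D_\alpha}$ are those of the countable set $H$ --- points of $D_\beta$ need not lie in $\overline{D_\alpha}$ for $\alpha\ne\beta$, and cutting each $D_\alpha$ down to $\bigcap_\beta\overline{D_\beta}$ destroys the density relation you established. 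This is precisely why the paper never uses one approachable set in isolation: Theorems \ref{tm:pinw} and \ref{tm:main} exist to combine a $\pi$-network's worth of them so that $\bigcup\{H_K:K\in\mathcal K\}$ is dense in the ambient space. Without that, your final step does not produce $\omega_1$ pairwise disjoint sets dense in any identifiable subspace.
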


\begin{proof}
Let $\mathcal{G}$ be a maximal disjoint collection of countable open sets in $X$. Then $\mathcal{G}$ is
countable because $s(X) = \omega$ implies that $X$ is CCC, hence $\cup \mathcal{G}$ is
countable as well. But then $\cup \mathcal{G}$ is not dense in $X$ and so $U = Int(X \setm \overline{\cup \mathcal{G}}) \ne \emptyset$.
By the maximality of $\mathcal{G}$ we have $\Delta(U) > \omega$ and this, together with $s(U) = \omega$ implies that $U$ is maximally resolvable, in view of Theorem 2.2
of \cite{JSSz}. But then $\Delta(U) \ge \omega_1$ implies that $U \in \mathfrak{R}_{\omega_1}(X)$.

\end{proof}

Let us denote by $\mathcal{E}$ the class of all spaces that have countable extent. Then, by our introductory convention,
$\widetilde{\mathcal{E}}$ denotes the class of all (countable extent)-generated spaces. Below we define certain subclasses of
$\mathcal{E}$ that will play an important role in the proof of Theorem \ref{tm:c.e.-generated}.

\begin{definition}\label{def:Z1}

\smallskip

\begin{enumerate}[(1)]
\item We shall denote by $\mathcal{Z}_1$ the class of all spaces $Z \in \mathcal{E}$ that
have a {\em dense proper} subset $A$ that is $\omega$-closed in $Z$.

\smallskip

\item $\mathcal{Z}_2$ will denote the class of
those $Z \in \mathcal{Z}_1$ for which $I(Z)$, the set of isolated points
of $Z$, is an $\omega$-closed dense subset of $Z$, moreover $|I(Z)| = \omega_1$.

\smallskip

\item  $\mathcal{Z}_3 = \{Z \in \mathcal{Z}_2 : |I(Z)^\circ| \le \omega\}$.
\end{enumerate}

\end{definition}

The following simple lemma will turn out to be useful.

\begin{lemma}\label{lm:eZ1}
For any space X, assume that $L \in \mathcal{E}(X)$ and $A$ is an $\omega$-closed subset of $X$
such that $A \cap L$ is not closed in the subspace $L$. Then $\overline{A \cap L} \cap L \in \mathcal{Z}_1$.
\end{lemma}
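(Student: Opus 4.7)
My plan is to set $Z := \overline{A \cap L} \cap L$ and verify the three conditions that define membership in $\mathcal{Z}_1$: (a) $Z$ has countable extent, (b) $A \cap L$ is a dense proper subset of $Z$, and (c) $A \cap L$ is $\omega$-closed in $Z$. Observe first that $Z$ is, by construction, a closed subspace of $L$ (since $\overline{A \cap L}$ is closed in $X$, its trace on $L$ is closed in the subspace topology on $L$). Because countable extent is inherited by closed subspaces, the assumption $L \in \mathcal{E}(X)$ immediately yields $Z \in \mathcal{E}$, settling (a).

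For (b), note that $A \cap L \subseteq Z$ and the closure of $A \cap L$ in $Z$ equals $\overline{A\cap L}\cap Z = Z$, so $A\cap L$ is dense in $Z$. The fact that $A\cap L$ is \emph{proper} in $Z$ is exactly the hypothesis that $A\cap L$ is not closed in $L$: closedness of $A\cap L$ in $L$ is equivalent to $\overline{A\cap L}\cap L = A\cap L$, which fails by assumption.

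The only genuinely substantive step is (c). Given any countable $B\subseteq A\cap L$, I need to check that the closure of $B$ in $Z$ lies inside $A\cap L$. Since $A$ is $\omega$-closed in $X$ and $B\subseteq A$ is countable, $\overline{B}^X\subseteq A$. Therefore
\[
\overline{B}^Z = \overline{B}^X \cap Z \;\subseteq\; A \cap \overline{A\cap L} \cap L \;\subseteq\; A\cap L,
\]
which is exactly what $\omega$-closedness of $A\cap L$ in $Z$ demands.

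There is no real obstacle here; the statement is essentially a bookkeeping lemma that packages the data $(L, A)$ into the $\mathcal{Z}_1$-format demanded by Definition \ref{def:Z1}(1), so that later it can be fed into the structural analysis of (countable extent)-generated spaces. The only point to be mindful of is computing the closure in $Z$ correctly as the intersection of the closure in $X$ with $Z$, which is justified because $Z$ carries the subspace topology and $\overline{B}^L = \overline{B}^X\cap L$ automatically lies in $\overline{A\cap L}$ once $B\subseteq A\cap L$.
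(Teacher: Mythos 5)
Your proposal is correct and follows essentially the same route as the paper's own proof: countable extent from being closed in $L$, properness from non-closedness of $A\cap L$ in $L$, and $\omega$-closedness from $\overline{B}\subseteq A$ for countable $B\subseteq A\cap L$. No issues.
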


\begin{proof}
Clearly, $\overline{A \cap L} \cap L \in \mathcal{E}$ because it is closed in $L$,
moreover, $A \cap L$  is a proper dense subset of $\overline{A \cap L} \cap L$. Finally, $A \cap L$ is $\omega$-closed in
$\overline{A \cap L} \cap L$. Indeed, this is because if $B \in [A \cap L]^\omega$ then
$\overline{B} \cap L$, the closure of $B$ in $L$, is included in  $A \cap L$.

\end{proof}

Note that if $Z \in \mathcal{Z}_2$ then the $\omega$-closedness of $I(Z)$ in $Z$ means that every countable subset
of $I(Z)$ is closed discrete in $Z$ and hence for every uncountable $E \subs I(Z)$ we have $E' = E^\circ \ne \emptyset$
because $Z$ has countable extent. Consequently, Theorem \ref{tm:app} implies that if $Z \in \mathcal{Z}_3$ then
$Z^\circ$ has a subset $H_Z$ which is $\omega_1$-approachable in $Z$. Moreover, as $Z^\circ$ is countable,
the disjoint family witnessing this can clearly be chosen in $[I(Z)]^{\omega_1}$.

\begin{lemma}\label{lm:Z1Z3}
Assume that $Z \in \mathcal{Z}_1$ with $A \subs Z$ being a proper dense $\omega$-closed set in $Z$.
If, in addition, $Z$ admits a good partition $\{Z_i : 0 \le i \le k\}$ and $\mathfrak{R}_{\omega}(Z) = \emptyset$,
then there is some $E \in \mathcal{D}_{\omega_1}(A)$ such that $\overline{E} \in \mathcal{Z}_3$.
\end{lemma}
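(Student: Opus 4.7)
The plan is to build $E$ as an uncountable discrete subset of $A$ lying in the top layer $Z_k$ of the good partition; then the good-partition condition at level $k$ will automatically force every countable subset of $E$ to be closed discrete in $Z$, and the remaining $\mathcal{Z}_3$-properties will follow after a controlled thinning.

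First I would establish $s(A)>\omega$. If instead $s(A)=\omega$, then Lemma \ref{lm:s<z} applied to the subspace $A$ yields either $\mathfrak{R}_{\omega_1}(A)\ne\empt$ (contradicting $\mathfrak{R}_\omega(Z)=\empt$, since $\omega_1$-resolvability implies $\omega$-resolvability) or $d(A)\le\omega$; in the latter case a countable dense subset of $A$ is dense in $Z$ with $Z$-closure contained in $A$ by $\omega$-closedness, forcing $A=Z$ and contradicting properness. Hence $s(A)>\omega$, and since $A=\bigcup_{0\le i\le k}(A\cap Z_i)$ is a finite disjoint partition, some $A\cap Z_{i^{*}}$ contains an uncountable discrete set.

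I would next arrange $i^{*}=k$: choosing $i^{*}$ maximal, the case $i^{*}<k$ would force $s(A\cap Z_j)=\omega$ for every $j>i^{*}$, so each such $A\cap Z_j$ is separable by the same Lemma \ref{lm:s<z} dichotomy, and a countable dense set for their union has $Z$-closure inside $A$ by $\omega$-closedness, which combined with the good-partition constraint $B'\subs \bigcup_{j>i^{*}}Z_j$ for countable discrete $B\subs Z_{i^{*}}$ should yield the desired contradiction. Fix then $E_1\subs A\cap Z_k$ uncountable discrete with $|E_1|=\omega_1$. The good partition at level $k$ now gives $B'=\empt$ in $Z$ for every countable $B\subs E_1$, so $B$ is closed discrete in $Z$. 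Consequently, for every $y\in\overline{E_1}\setm E_1$, no countable subset of $E_1$ accumulates at $y$, so every neighbourhood of $y$ contains uncountably many $E_1$-points; thus $\overline{E_1}\setm E_1=E_1^\circ$ and $\overline{E_1}\in\mathcal{Z}_2$ with $I(\overline{E_1})=E_1$.

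To reach $\mathcal{Z}_3$ I would finally thin $E_1$ down to some $E$ of size $\omega_1$ with $|E^\circ|\le\omega$. The natural route is a transfinite construction that, at each stage, excludes one candidate complete accumulation point using regularity of $Z$; alternatively, one iterates the lemma on the subspace $\overline{E_1}\in\mathcal{Z}_2$, whose inherited good partition has the same form. The main obstacle is precisely this last step: shrinking $|E^\circ|$ to countable is delicate because naive thinning only preserves $E^\circ\subs E_1^\circ$ without reducing it, and a careful argument is needed --- likely invoking Theorem \ref{tm:app} together with the hypothesis $\mathfrak{R}_\omega(Z)=\empt$ to rule out the persistence of $|E^\circ|>\omega$ after thinning.
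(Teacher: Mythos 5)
Your proposal has two genuine gaps, one in each half of the argument.

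First, the reduction to the top layer $Z_k$ does not work as you describe. You claim that if $i^{*}$, the largest index with $s(A\cap Z_{i^{*}})>\omega$, satisfies $i^{*}<k$, then separability of the upper layers plus the good-partition constraint ``should yield the desired contradiction'' --- but there is no contradiction to be had there. What you actually get is that every countable discrete $B\subs A\cap Z_{i^{*}}$ has $B'\subs A\cap\bigcup_{j>i^{*}}Z_j\subs\overline{H}$ for a suitable countable $H$; this is perfectly consistent, and the case $i^{*}<k$ must be handled \emph{constructively}, not ruled out. This is exactly the hard part of the paper's proof: it introduces the family $\mathcal{S}$ of regular closed $R$ with $A\cap R\ne R$, shows $s(A\cap R)>\omega$ for each such $R$, minimizes $i(R)=\max\{j: s(A\cap R\cap Z_j)>\omega\}$ over $\mathcal{S}$ to get $Q$, picks $p\in Q\setm A$ and an open $U\ni p$ with $\overline{U}\cap\overline{H}=\empt$, and uses the minimality of $i(Q)$ to guarantee $i(\overline{Q\cap U})=i(Q)$, i.e.\ that an uncountable discrete subset of $A\cap\overline{Q\cap U}\cap Z_{i(Q)}$ still exists after moving away from $\overline{H}$. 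For such a set $D$, countable subsets have derived sets trapped in both $\overline{U}$ and $\overline{H}$, hence empty, which is what gives $\overline{D}\in\mathcal{Z}_2$ in a layer \emph{below} $Z_k$. Your sketch only covers the easy case $s(A\cap Z_k)>\omega$ (which the paper dispatches in one line) and asserts away the substantive one.

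Second, the passage from $\mathcal{Z}_2$ to $\mathcal{Z}_3$, which you explicitly flag as ``the main obstacle'' and leave open, is resolved in the paper not by transfinite thinning or by Theorem \ref{tm:app}, but by Theorem \ref{tm:e}: since $\overline{D}=D\cup D^\circ$ and $D^\circ$ is closed in $Z\in\mathcal{E}$, it has countable extent; since $\mathfrak{R}_\omega(Z)=\empt$, $D^\circ$ is not $\omega$-resolvable, so Theorem \ref{tm:e} forces $\Delta(D^\circ)\le\omega$. Hence some open $U$ meets $D^\circ$ in a non-empty countable set, and taking $V$ with $x\in V\subs\overline{V}\subs U$ and $E=D\cap V$ gives $E^\circ\subs\overline{V}\cap D^\circ$ countable, so $\overline{E}\in\mathcal{Z}_3$. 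You are right that naive thinning only preserves $E^\circ\subs E_1^\circ$; the missing idea is to bound the dispersion character of the set of complete accumulation points via the $\omega$-resolvability theorem for countable-extent spaces, and then localize. As written, neither half of your argument closes.
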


\begin{proof}

{\bf Step 1.} There is some $D \in \mathcal{D}_{\omega_1}(A)$ such that $\overline{D} \in \mathcal{Z}_2$.

\smallskip

Note that every $E \in \mathcal{D}_\omega(Z_k)$ is closed in $Z$ by the definition of good partitions.
This implies that if $s(A \cap Z_k) > \omega$ then for any $D \in \mathcal{D}_{\omega_1}(Z_k)$
we have $\overline{D} \in \mathcal{Z}_2$. Thus we may assume that $s(A \cap Z_k) = \omega$.
On the other hand, we clearly have $d(A) > \omega$,
consequently lemma \ref{lm:s<z} implies that $s(A) > \omega$ as well.

Let us denote by $\mathcal{S}$ the family all those regular closed subsets $R$ of $Z$ for which
$A \cap R \ne R$. The above argument then yields $s(A \cap R) > \omega$ for all $R \in \mathcal{S}$.
This, in turn, implies that for every $R \in \mathcal{S}$
there is some $j < k$ such that $s(A \cap R \cap Z_j) > \omega$.
Let us then put
\begin{align*}
i(R) =\max \{j : s(A\cap R \cap Z_j)>{\omega}\}.
\end{align*}

Then we have $i(R) < k$ and we may choose $Q \in \mathcal{S}$ so that the value of $i(Q)$ is minimal among all
members of $\mathcal{S}$.
For each $j$ with $i(Q) < j \le k$ we have $s(A \cap Q \cap Z_j) = d(A \cap Q \cap Z_j) = \omega$ by lemma \ref{lm:s<z}. Consequently,
we may pick a countable dense subset $H$ of $A \cap Q \cap \bigcup_{i(Q) < j \le k}Z_j$.
We may pick a point $p \in Q \setm A$ and then $p \notin \overline{H} \subs A$. Thus we may find an open neighborhood
$U$ of $p$ with $\overline{U} \cap \overline{H} = \emptyset$. But then $p \in \overline{Q \cap U} \setm A$ implies
$\overline{Q \cap U} \in \mathcal{S}$, hence the trivial inequality $i(Q) \ge i(\overline{Q \cap U})$ implies $i(Q) = i(\overline{Q \cap U})$.

Then $s(A \cap \overline{Q \cap U} \cap Z_{i(Q)}) > \omega$ and for any $$D \in \mathcal{D}_{\omega_1}(A \cap \overline{Q \cap U} \cap Z_{i(Q)})$$
we have $\overline{D} \in \mathcal{Z}_2$. Indeed, this is because for any $E \in [D]^\omega$ we have
both  $E' \subs \overline{U}$ and $E' \subs A \cap Q \cap \bigcup_{i(Z) < j \le k}Z_j \subs \overline{H}$,
while $\overline{U} \cap \overline{H} = \emptyset$. Hence Step 1. is proven.

\medskip

{\bf Step 2.} Assume that $D \in \mathcal{D}_{\omega_1}(Z)$ is such that $\overline{D} \in \mathcal{Z}_2$.
Then $\overline{D} = D \cup D^\circ$ and $D^\circ \in \mathcal{E}$ being closed in $Z \in \mathcal{E}$.
But $D^\circ \subs Z$ is not $\omega$-resolvable, hence $\Delta(D^\circ) \le \omega$ by Theorem \ref{tm:e}.
This means that there is an open set $U$ in $Z$ such that $0 < |U \cap D^\circ| \le \omega$.

Pick a point $x \in U \cap D^\circ$ and an open set $V$ such that $x \in V \subs \overline{V} \subs U$.
Then for $E = D \cap V$ we have $|E|  = \omega_1$, moreover $E^\circ \subs \overline{V} \cap D^\circ \subs U \cap D^\circ$
implies that $E^\circ$ is countable, i.e. $\overline{E} \in \mathcal{Z}_3$.

\end{proof}

\begin{corollary}\label{cor:piZ3}
If $X \in \widetilde{\mathcal{E}}$ satisfies $\Delta(X) > \omega$ and $\mathfrak{R}_\omega(X) = \emptyset$
then $\{Z \subs X : Z \in \mathcal{Z}_3\}$ is a $\pi$-network in $X$.
\end{corollary}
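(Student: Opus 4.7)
The plan is to adapt, for the countable-extent-generated setting, the strategy from the proof of Theorem \ref{tm:lindelof-generated}: inside an arbitrary open set, find a point of uncountable tightness, produce an $\omega$-closed set $A$ accumulating to that point, use the generating class to extract a witness $L$ to non-closedness, and funnel the result through Lemmas \ref{lm:eZ1} and \ref{lm:Z1Z3}. Concretely, fix $U \in \tau^+(X)$ and aim to find $Z \in \mathcal{Z}_3$ with $Z \subs U$.

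First, $T = \{x \in X : t(x,X) > \omega\}$ is dense in $X$: otherwise some non-empty open $W$ would satisfy $t(W) = \omega$ and $\Delta(W) \ge \Delta(X) > \omega$, so Theorem \ref{tm:Pt} would make $W$ $\omega_1$-resolvable, contradicting $\mathfrak{R}_\omega(X) = \emptyset$. Using Lemma \ref{lm:goodpartition} on $U$, I pick an open $V_0 \subs U$ carrying a good partition, a point $x \in T \cap V_0$, and, by regularity, an open $V$ with $x \in V \subs \overline{V} \subs V_0$; then $\overline{V}$ inherits a good partition.

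Next, using $t(x,X) > \omega$, I choose $B_0 \subs X$ with $x \in \overline{B_0}$ but $x \notin \overline{C}$ for every countable $C \subs B_0$; the set $B := B_0 \cap V$ still has both properties, while lying in $V$. Setting
\[A \;=\; B \cup \bigcup \{\overline{C} : C \in [B]^\omega\} \;\subs\; \overline{V},\]
a routine verification (any countable $D \subs A$ has a countable trace $E \subs B$ with $D \subs \overline{E}$) shows $A$ is $\omega$-closed, while $x \in \overline{A}$ and, by the choice of $B$, $x$ is missing from every term of the union defining $A$. Hence $A$ is not closed in $X$, so by the (countable extent)-generatedness of $X$ there is $L_0 \in \mathcal{E}(X)$ with $A \cap L_0$ not closed in $L_0$. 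Intersecting, $L := L_0 \cap \overline{V}$ is closed in $L_0$, hence still in $\mathcal{E}(X)$, and since $A \subs \overline{V}$ we have $A \cap L = A \cap L_0$, which therefore fails to be closed in $L$ as well.

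Finally, Lemma \ref{lm:eZ1} gives $Y := \overline{A \cap L} \cap L \in \mathcal{Z}_1$; the good partition of $\overline{V}$ restricts to one of $Y$, and $\mathfrak{R}_\omega(Y) \subs \mathfrak{R}_\omega(X) = \emptyset$. Lemma \ref{lm:Z1Z3} then yields $E \in \mathcal{D}_{\omega_1}(A \cap L)$ with $\overline{E} \in \mathcal{Z}_3$, and since $\overline{E} \subs Y \subs \overline{V} \subs U$, this $\overline{E}$ is the required member of $\mathcal{Z}_3$ inside $U$. The only genuine technical hurdle is the construction of $A$: the trick is to close $B$ under closures of all countable subsets in one shot, which is automatically $\omega$-closed while avoiding $x$, because countable subsets of $A$ trace back to countable subsets of $B$. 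Everything else is an orchestration of the preceding lemmas, paralleling the Lindelöf-generated argument but with countable extent in place of the Lindelöf property.
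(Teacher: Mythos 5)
Your proposal is correct and follows essentially the same route as the paper's proof: density of the uncountable-tightness set via Theorem \ref{tm:Pt}, the good-partition $\pi$-base from Lemma \ref{lm:goodpartition}, an $\omega$-closed non-closed set $A$ inside $\overline{V}$, a countable-extent witness from the generatedness hypothesis, and then Lemmas \ref{lm:eZ1} and \ref{lm:Z1Z3}. The only differences are cosmetic: you spell out the construction of $A$ (which the paper leaves implicit) and you restrict $L_0$ to $\overline{V}$, a harmless redundancy since $\overline{A\cap L_0}\subs\overline{V}$ already.
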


\begin{proof}
Theorem \ref{tm:Pt} and $\mathfrak{R}_\omega(X) = \emptyset$ imply that $T = \{x \in X : t(x,X) > \omega\}$ is dense in $X$.
Also, by Lemma \ref{lm:goodpartition} the open subsets of $X$ admitting a good partition form a $\pi$-base $\mathcal{G}$ in $X$.
For every $U \in \mathcal{G}$ there is a point $x \in T \cap U$. We may then pick an open set $V$ such that $x \in V \subs \overline{V} \subs U$.
Then $t(x,X) = t(x,\overline{V}) = \omega_1$ implies the existence of a set $A \subs \overline{V}$ that is
$\omega$-closed but not closed because $x \in \overline{A} \setm A$.
$X \in \widetilde{\mathcal{E}}$ then implies the existence of some $Y \in \mathcal{E}(X)$
such that $A \cap Y$ is not closed in $Y$. By Lemma \ref{lm:eZ1} then $W = \overline{A \cap Y} \cap Y \in \mathcal{Z}_1$.

We also have $W \subs \overline{A} \subs \overline{V} \subs U \in \mathcal{G}$, which implies that
$W$ admits a good partition, hence we may apply Lemma \ref{lm:Z1Z3} to $W$
to find $Z \in \mathcal{Z}_3$ such that $Z \subs W \subs U$, and this completes our proof.

\end{proof}

Before presenting our final -- and crucial -- lemma, we need a definition.

\begin{definition}\label{def:Y(X)}
For any space $X$ we shall denote by $\mathcal{Y}(X)$ the family of all those subspaces
$Y \subs X$ that satisfy conditions (i) - (iii) below:

\begin{enumerate}[(i)]
\item $Y$ is $\omega$-closed in $X$;

\smallskip

\item if $Z \in \mathcal{Z}_3(X)$ is such that $I(Z) \subs Y$ then $Z \cap Y \in \mathcal{Z}_3$;

\smallskip

\item $\mathcal{Z}_3(Y) = \{Z \subs Y : Z \in \mathcal{Z}_3\}$ is a $\pi$-network in $X$.

\end{enumerate}
\end{definition}

Corollary \ref{cor:piZ3} implies that if $X \in \widetilde{\mathcal{E}}$ satisfies both
$\Delta(X) > \omega$ and $\mathfrak{R}_\omega(X) = \emptyset$,
then $X \in \mathcal{Y}(X)$. On the other hand, it is obvious from (iii) that $\mathcal{Y}(X) \ne \emptyset$
implies $\Delta(X) > \omega$.

\begin{lemma}\label{lm:Y(X)}
Assume that $X \in \widetilde{\mathcal{E}}$ and $\mathfrak{R}_\omega(X) = \emptyset$. Then
for every $Y \in \mathcal{Y}(X)$ there is $W \in \mathcal{Y}(X)$ such that $W \subs Y$ and
$Y \setm W$ is dense in $X$.
\end{lemma}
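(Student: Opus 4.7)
The plan is to build $W$ by removing from $Y$ a dense trace built out of the $\omega_1$-approachability witnesses coming from the $\mathcal{Z}_3$-subspaces of $Y$. Since $Y \in \mathcal{Y}(X)$, condition (iii) tells us that $\mathcal{L} := \mathcal{Z}_3(Y)$ is a $\pi$-network in $X$; for each $L \in \mathcal{L}$ I fix the distinguished countable set $H_L$ together with a witnessing disjoint family $\{X_{L,\alpha} : \alpha < \omega_1\} \subs [I(L)]^{\omega_1}$ of its $\omega_1$-approachability in $L$, as provided by the remark following Definition \ref{def:Z1}. Applying Theorem \ref{tm:pinw} I select a maximal subfamily $\mathcal{K} \subs \mathcal{L}$ for which the entire collection $\{X_{K,\alpha} : K \in \mathcal{K},\,\alpha < \omega_1\}$ is almost disjoint; the conclusion of that theorem yields that $D := \bigcup\{H_K : K \in \mathcal{K}\}$ is dense in $X$.

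I then put $R := \bigcup\{X_{K,0} : K \in \mathcal{K}\}$ and $W := Y \setm R$. Since $(X_{K,0})^\circ = H_K$ computed inside $K$, we have $D \subs \overline{R}$, so $R$ is dense in $X$; and $R \subs \bigcup_{K \in \mathcal{K}} I(K) \subs Y$, so $Y \setm W = R$ is the required dense complement. The bulk of the work is then to verify $W \in \mathcal{Y}(X)$. For condition (iii), given $U \in \tau^+(X)$ I first pick $L \in \mathcal{L}$ with $\overline{L} \subs U$, and then use maximality of $\mathcal{K}$ to find $K \in \mathcal{K}$ together with $\alpha \ne 0$ and $\beta < \omega_1$ such that $|X_{K,\alpha} \cap X_{L,\beta}| = \omega_1$; the closure of this intersection inside $K$ then supplies, via a verification in the spirit of the proof of Corollary \ref{cor:piZ3}, a $\mathcal{Z}_3$-subspace of $W$ lying inside $U$. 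For condition (ii), given $Z \in \mathcal{Z}_3(X)$ with $I(Z) \subs W$, condition (ii) of $Y$ already delivers $Z \cap Y \in \mathcal{Z}_3$; since $I(Z) \subs W$ is disjoint from $R$, the further removal of $R \cap Z$ affects only a countable set of non-isolated points of $Z \cap Y$, which should preserve $\mathcal{Z}_3$-membership after a routine check of the four conditions on $I$ and $I^\circ$.

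The main obstacle will be condition (i): $W$ must be $\omega$-closed in $X$. Since $Y$ is already $\omega$-closed, a countable $B \subs W$ has $\overline{B} \subs Y$, so what must be ruled out is $\overline{B} \cap R \ne \empt$. For $r \in X_{K,0} \subs I(K)$ the isolation of $r$ in $K$ combined with the $\omega$-closedness of $I(K)$ in $K$ immediately prevents any accumulation of $r$ by points of $B$ lying inside $K$; so the delicate step is excluding countable accumulations of $r$ by points of $B \setm K$. This is where I expect the combination of condition (ii) of $Y \in \mathcal{Y}(X)$ and the $\widetilde{\mathcal{E}}$-hypothesis on $X$ to enter in earnest, perhaps by arguing that a stray countable approach to $r$ from $Y \setm K$ would force the existence of some $Z \in \mathcal{Z}_3(X)$ with $I(Z) \subs Y$ but $Z \cap Y \notin \mathcal{Z}_3$, contradicting $Y \in \mathcal{Y}(X)$. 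If this route fails, a back-up plan would be to enlarge $R$ by closing $\bigcup X_{K,0}$ under a controlled $\omega$-closure operation inside $Y$ and to show, using condition (ii) of $Y$ again, that the enlargement stays dense and still misses a rich enough part of $Y$ for (ii) and (iii) to survive. This final verification is the technical heart of the argument.
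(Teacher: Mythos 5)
Your construction goes in the opposite direction from one that can be made to work, and the step you yourself flag as ``the main obstacle'' --- condition (i) for $W$ --- is a genuine, unfilled gap, not a routine verification. You define $W = Y \setm R$ where $R = \bigcup\{X_{K,0} : K \in \mathcal{K}\}$ is dense, so density of $Y \setm W$ comes for free, but there is simply no mechanism in the hypotheses that makes $Y \setm R$ $\omega$-closed. A point $r \in X_{K,0} \subs I(K)$ is isolated in $K$, and $\omega$-closedness of $I(K)$ in $K$ only rules out accumulation at $r$ by countable sets \emph{lying in $K$}; nothing prevents $r$ from being an accumulation point of a countable subset of $Y \setm R$ coming from outside $K$ --- regularity of $X$ gives no control here, and your hoped-for route (that such an accumulation would manufacture a $Z \in \mathcal{Z}_3(X)$ with $I(Z) \subs Y$ but $Z \cap Y \notin \mathcal{Z}_3$) does not connect: a countable set clustering at a single point does not produce a $\mathcal{Z}_3$-subspace. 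There is a secondary gap in your (iii) as well: maximality of $\mathcal{K}$ only gives \emph{some} $\alpha,\beta$ with $|X_{K,\alpha}\cap X_{L,\beta}|=\omega_1$, not $\alpha \ne 0$; and even for $\alpha \ne 0$ the set $X_{K,\alpha} \setm R$ need not be uncountable, since $R$ is a union of $\omega_1$ many sets each of which may meet $X_{K,\alpha}$ in a countable set whose union covers it.

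The paper's proof inverts your construction: it sets $S = \bigcup\{H_K : K \in \mathcal{K}\}$ (a union of \emph{countable} sets $H_K$) and takes $W$ to be the $\omega$-closure of $S$ in $X$, so that condition (i) is automatic and $W \subs Y$ because $Y$ is $\omega$-closed. The price is that the density of $Y \setm W$ becomes the hard part, and it is handled by two ingredients entirely absent from your proposal: first, a quantitative strengthening showing $|U \cap S| > \omega_1$ for every $U \in \tau^+(X)$ (via Theorem \ref{tm:main} applied to a subfamily of size $\le \omega_1$, whose conclusion would contradict $\mathfrak{R}_\omega(X) = \empt$); second, assuming $U \cap Y \subs W$ for some open $U$, a closure argument building $Q = \bigcup_n Q_n \in [U \cap Y]^{\omega_1}$ closed under $y \mapsto U \cap K_y$ and $y \mapsto A_y$, to which Theorem \ref{tm:main} applies and yields that $Q$ is $\omega_1$-resolvable --- again contradicting $\mathfrak{R}_\omega(X) = \empt$. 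Your verifications of (ii) and (iii) do contain some of the right ingredients (the maximality of $\mathcal{K}$, the fact that $Z \setm I(Z) = I(Z)^\circ$ is countable for $Z \in \mathcal{Z}_3$), but as written the proof cannot be completed along the lines you propose.
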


\begin{proof}
We may fix for every $L \in \mathcal{Z}_3(Y)$ a subset $H_L$ which is $\omega_1$-approachable in $L$, as well as
a disjoint family $\{X_{L,\alpha} : \alpha < \omega_1\} \subs [I(L)]^{\omega_1}$
witnessing the $\omega_1$-approachability of $H_L$ in $L$. Next, let $\mathcal{K}$ be a {\em maximal} subfamily of $\mathcal{L}$
such that if $K,\,L$ are distinct members of $\mathcal{K}$ then $|X_{K,\alpha} \cap X_{L,\beta}| \le \omega$ for all $\alpha, \beta < \omega_1$.
By Theorem \ref{tm:pinw} then $S = \bigcup \{H_K : K \in \mathcal{K}\}$ is dense in $X$. This means that $U \cap S \ne \emptyset$ for all
$U \in \tau^+(X)$, however much more is true: $|U \cap S| > \omega_1$ for all $U \in \tau^+(X)$.

Indeed, $|U \cap S| \le \omega_1$ would imply the existence of $\mathcal{M} \subs \mathcal{K}$ with $|\mathcal{M}| \le \omega_1$
such that $U \cap S \subs \bigcup \{H_M : M \in \mathcal{M}\}$. But by Theorem \ref{tm:main} then we had $\omega_1$ many
pairwise disjoint sets $D_\alpha \subs \bigcup \mathcal{M}$ such that
$U \cap S \subs \bigcup \{H_M : M \in \mathcal{M}\} \subs \overline{D_\alpha}$ for all $\alpha < \omega_1$.
But then $\{U \cap D_\alpha : \alpha < \omega_1\}$ are disjoint dense subsets of $U$,
contradicting $\mathfrak{R}_\omega(X) = \emptyset$.

We claim that the $\omega$-closure $W = \bigcup \{\overline{A} : A \in [S]^\omega\}$ of $S$ in $X$ is as required.
Now, $W \subs Y$ and (i) for $W$ are obvious.

To prove (ii), consider any $Z \in \mathcal{Z}_3(X)$ such that $I(Z) \subs W$, then $Y \in \mathcal{Y}(X)$
implies $Z \cap Y \in \mathcal{Z}_3$. Thus for any $E \in [I(Z)]^{\omega_1}$ we clearly have
$L = \overline{E} \cap Z \cap Y \in \mathcal{Z}_3(Y)$. But then there are
$K \in \mathcal{K}$ and $\alpha, \beta < \omega_1$ for which $C = X_{K,\alpha} \cap X_{L,\beta} \subs E$
is uncountable. But then $\emptyset \ne H_K \cap C^\circ \subs S \cap E^\circ$, hence as $E \in [I(Z)]^{\omega_1}$
was arbitrary, we have $Z \cap W \in \mathcal{Z}_3$.

Now, to prove (iii), note first that, as $\Delta(W) \ge \Delta(S) > \omega_1$ and $\mathfrak{R}_\omega(X) = \emptyset$, Theorem \ref{tm:Pt} implies that
$\{y \in W : t(y,W) > \omega_1\}$ is dense in $W$. Also, by Lemma \ref{lm:goodpartition} the open subsets of $X$ admitting a good partition
form a $\pi$-base $\mathcal{G}$ in $X$. Consequently, it will suffice to show that every $U \in \mathcal{G}$ includes some member of $\mathcal{Z}_3(W)$.

To see this, consider any $U \in \mathcal{G}$ and pick a point $p \in U \cap W$ such that $t(p, U \cap W) > \omega$ and an open set $V$
such that  $p \in V \subs \overline{V} \subs U$. As we have seen above several times, then there is a set $A \subs \overline{V} \cap W$
that is $\omega$-closed in $W$ and hence in $X$ but not closed in $W$ and hence in $X$. Then $X \in \widetilde{\mathcal{E}}$ implies
the existence of some $L \in \mathcal{E}(X)$ such that $A \cap L$ is not closed in $L$. By Lemma \ref{lm:eZ1} then we have
$M = \overline{A \cap L} \cap L \in \mathcal{Z}_1$, moreover $A \cap L$ is an $\omega$-closed proper dense subset of $M$.
Then $M \subs U$ implies that $M$ admits a good partition and we also have $\mathfrak{R}_{\omega}(M) = \emptyset$, hence Lemma \ref{lm:Z1Z3}
implies the existence of some $Z \in \mathcal{Z}_3(M)$ such that $I(Z) \subs A \cap L \subs W$. But then by (ii) we have
$Z \cap W \in \mathcal{Z}_3(W)$ and $Z \cap W \subs M \subs U$.

It remains to prove that $Y \setm W$ is dense in $X$. Assume, on the contrary, that there is some $U \in \tau^+(X)$ such that
$U \cap Y \subs W$. We may then fix for each point $y \in U \cap Y \cap S$ a member $K_y \in \mathcal{K}$ such that $y \in H_{K_y}$
and for each point $y \in U \cap Y \setm S$ a countable set $A_y \subs U \cap S$ such that $y \in \overline{A_y}$.

We now define sets $Q_n \in [U \cap Y]^{\omega_1}$ for $n < \omega$ as follows. $Q_0 \in [U \cap Y]^{\omega_1}$ is chosen arbitrarily.
Then, if $Q_n \in [U \cap Y]^{\omega_1}$ has been defined then we put
$$Q_{n+1} = Q_n \cup \bigcup \{U \cap K_y : y \in  Q_n \cap S\} \cup \bigcup \{A_y : y \in Q_n \setm S\}\,.$$
A straight forward induction then yields $Q_n \in [U \cap Y]^{\omega_1}$ for all $n < \omega$.
Finally we put $Q = \cup_{n < \omega} Q_n$.

Then $Q \cap S$ is dense in $Q$ because for every $y \in Q \setm S$ we have $A_y \subs Q \cap S$.
Moreover, for every $y \in Q \cap S$ we have $U \cap K_y \subs Q$.

Now consider the collection $\mathcal{M} = \{K_y : y \in Q \cap S\}$. Then $|\mathcal{M}| \le \omega_1$,
hence Theorem \ref{tm:main} applies and yields us disjoint sets
$D_\alpha \subs \bigcup \mathcal{M}$ for $\alpha < \omega_1$ such that
$\bigcup \{H_M : M \in \bigcup \mathcal{M}\} \subs \overline{D_\alpha}$ for all $\alpha < \omega_1$.

For every $y \in Q \cap S$ we have $y \in H_{K_y} \subs \overline{D_\alpha}$, hence as $U$ is open,
$y \in \overline{U \cap D_\alpha}$ as well. But for each $M \in \mathcal{M}$ we have $U \cap M \subs Q$,
hence $D_\alpha \subs \bigcup \mathcal{M}$ implies $U \cap D_\alpha \subs Q$. But this would imply
that $Q$ is $\omega_1$-resolvable, contradicting that $\mathfrak{R}_\omega(X) = \emptyset$.

\end{proof}

Now we are ready to prove Theorem \ref{tm:c.e.-generated}.

\begin{proof}[Proof of Theorem \ref{tm:c.e.-generated}]

\end{proof}
Since $\widetilde{\mathcal{E}}$ is open hereditary,
by Theorem \ref{tm:el} it suffices to prove that for every $X \in \widetilde{\mathcal{E}}$ with $\Delta(X) > \omega$
we have $\mathfrak{R}_\omega(X) \ne \emptyset$. Assume, on the contrary, that $\mathfrak{R}_\omega(X) = \emptyset$
for some such a space $X$.

Then, as we have noted above, $X \in \mathcal{Y}(X)$. We may then define a sequence $\{Y_n : n < \omega\} \subs \mathcal{Y}(X)$
as follows. Put $Y_0 = X$, and if $Y_n \in \mathcal{Y}(X)$ has been defined then we apply Lemma \ref{lm:Y(X)} to obtain
$Y_{n+1} \in \mathcal{Y}(X)$ such that $Y_{n+1} \subs Y_n$ and $Y_n \setm Y_{n+1}$ is dense in $X$.
But then $\{Y_n\setm Y_{n+1}:n<{\omega}\}$ is a family of  pairwise disjoint dense sets in $X$,
contradicting $\mathfrak{R}_\omega(X) = \emptyset$ and thus completing our proof.


\begin{thebibliography}{20}

\bibitem{CG}
W.W. Comfort, S. Garcia-Ferreira, Resolvability: A selective survey and some new
results, Topology Appl. 74 (1996) 149-167.

\bibitem{El}
El'kin A.G., On the maximal resolvability of products of topological spaces, Soviet Math.
Dokl. 10 (1969), 659-662.

\bibitem{F}
Filatova, M. A., Resolvability of Lindelöf spaces. (Russian. English, Russian summary)
Fundam. Prikl. Mat. 11 (2005), no. 5, 225--231; translation in
J. Math. Sci. (N. Y.) 146 (2007), no. 1, 5603-5607



\bibitem{FO}
Maria A. Filatova, Alexander V. Osipov
{\em On resolvability of Lindelöf generated spaces}, arxiv:1712.00803.
Siberian Electronic Mathematical Reports,
Vol. 14,  (2017) pp. 1444-1444.

\bibitem{H}
Hewitt, Edwin, A problem of set-theoretic topology. Duke Math. J. 10, (1943). 309-333.

\bibitem{J}
I. Juhász, Cardinal functions in topology -- ten years later, 2nd ed., Mathematical Centre
Tracts, vol. 123, Mathematisch Centrum, Amsterdam, 1983.


\bibitem{JSSz}
Juhász, István; Soukup, Lajos; Szentmiklóssy, Zoltán,
Resolvability of spaces having small spread or extent, Topology Appl. 154 (2007), 144-154.

\bibitem{JSSz1}
Juhász, István; Soukup, Lajos; Szentmiklóssy, Zoltán,
Resolvability and monotone normality. Israel J. Math. 166 (2008), 1-16.

\bibitem{JSSz2}
Juhász, István; Soukup, Lajos; Szentmiklóssy, Zoltán,
{\em Regular spaces of small extent are ${\omega}$-resolvable.}
Fund. Math. 228 (2015), no. 1, 27-46.

\bibitem{M}
V.I. Malykhin, Resolvability of $A$-, $CA$- and $PCA$-sets in compacta, Topology Appl.
80 (1-2) (1997) 161-167.


\bibitem{P}
Pytkeev, E. G. Maximally decomposable spaces. (Russian) Topology (Moscow, 1979).
Trudy Mat. Inst. Steklov. 154 (1983), 209-213.

\bibitem{Sh}
P.L. Sharma, S. Sharma
{\em Resolution Property in Generalized  k-Spaces},
Topology and its Applications  29 (1988) 61--66


\end{thebibliography}
\end{document}